\newcommand{\abs}[1]{{\left|#1\right|}}
\newcommand{\norma}[1]{{\left\Vert#1\right\Vert}}
\def\XXint#1#2#3{{\setbox0=\hbox{$#1{#2#3}{\int}$}
    \vcenter{\hbox{$#2#3$}}\kern-.5\wd0}}
\theoremstyle{definition}
\newtheorem{definizione}{Definition}[section]
\theoremstyle{plain}
\newtheorem{teorema}{Theorem}[section]
\newtheorem{lemma}[teorema]{Lemma}
\newtheorem{corollario}[teorema]{Corollary}
\theoremstyle{definition}
\newtheorem{esempio}{Example}[section]
\newtheorem{oss}[esempio]{Remark}
\renewcommand{\div}{\text{div}}
\DeclareMathOperator{\R}{\mathbb{R}}
\newcommand{\myfootnote}[2]{\begingroup
	\def\@makefnmark{}%
	\addtocounter{footnote}{-1}%
	\footnote{\textbf{#1} #2}%
	\endgroup}
\title{Comparison results for solutions to $p$-Laplace equations with Robin boundary conditions}
\author{Vincenzo Amato, Andrea Gentile, Alba Lia Masiello}
\date{\today}
\begin{document}
	\maketitle
	\myfootnote{}{2010 Mathematics Subject Classification: 35J92, 35P15. }
	\myfootnote{}{Key words and phrases: $p$-Lapacian, Robin boundary conditions.}
    \vspace{-0.8cm}
	\begin{abstract}
		In the last decades comparison results of Talenti type for Elliptic Problems with Dirichlet boundary conditions have been widely investigated.
		In this paper, we generalize the results obtained in \cite{ANT} to the case of $p$-Laplace operator  with Robin boundary conditions.

		The point-wise comparison, obtained in \cite{ANT} only in the planar case, holds true in any dimension if $p$ is sufficiently small.
	
	\end{abstract}
	
\section{Introduction}
Let $\beta$ be a positive parameter and let $\Omega$ be a bounded open set of $\R^n$, $n\ge 2$, with Lipschitz boundary. 

\noindent Let $f\in L^{p'}(\Omega)$ be a non-negative function. We consider the following problem
\begin{equation}
\label{p_originale}
\begin{cases}
-\div(\abs{\nabla u}^{p-2} \nabla u)= f & \text{ in } \Omega \\
\abs{\nabla u}^{p-2} \displaystyle{\frac{\partial u}{\partial \nu}} + \beta  \abs{u}^{p-2}u =0  & \text{ on } \partial \Omega.
\end{cases}
\end{equation}
A function $u \in W^{1,p}(\Omega)$ is a weak solution to \eqref{p_originale} if
\begin{equation}
\int_{\Omega} \abs{\nabla u}^{p-2}\nabla u \nabla \varphi \, dx + \beta \int_{\partial \Omega} \abs{u}^{p-2} u \varphi \, d\mathcal{H}^{n-1}(x) = \int_{\Omega} f \varphi \, dx \quad \forall \varphi \in W^{1,p}(\Omega).
\end{equation}
We want to establish a comparison principle with the solution to the following symmetrized problem
\begin{equation}
\label{p_tondo}
\begin{cases}
-\div(\abs{\nabla v}^{p-2} \nabla v)= f^{\sharp} & \text{ in } \Omega^\sharp \\
\abs{\nabla v}^{p-2}  \displaystyle{\frac{\partial v}{\partial \nu}} + \beta \abs{v}^{p-2}  v =0  & \text{ on } \partial \Omega^\sharp,
\end{cases}
\end{equation}
where $\Omega^\sharp$ is the ball centered in the origin with the same measure of $\Omega$ and $f^\sharp$ is the Schwarz rearrangement of $f$ (see next section for its definition).

This kind of problems has been widely investigated in the last decades. The first step is contained in \cite{T}, where Talenti proved a pointwise comparison result between $u^{\sharp}$ and $v$ in the case of Dirichlet Laplacian.
After this, several papers generalized the result of Talenti: for instance, the one by Talenti himself \cite{T2}, in which the operator is a generic non-linear operator in divergence form, or the one by Alvino, Lions and Trombetti \cite{ALT}
in which the authors deal with both elliptic and parabolic cases: in both papers, Dirichlet boundary conditions are considered. 

Different kind of boundary conditions are considered by Alvino, Nitsch and Trombetti in \cite{ANT}, where they establish a comparison between a suitable norm of $u$ and $v$, respectively solution  to 
\begin{equation*}
		\begin{cases}
			-\Delta u= f & \text{in} \, \Omega\\
			   \displaystyle{\frac{\partial u}{\partial \nu}} + \beta  u =0&\text{on} \,  \partial \Omega.
		\end{cases}
		\quad \quad
		\begin{cases}
		    -\Delta v= f^\sharp  & \text{in} \, \Omega^\sharp\\
			   \displaystyle{\frac{\partial u}{\partial \nu}} + \beta  u =0 &\text{on} \,  \partial \Omega^\sharp.
		\end{cases}
	\end{equation*}
They found out that if $f$ is a non-negative function in $L^2(\Omega)$, then
	\begin{align*}
	    \norma{u}_{L^{k,1}(\Omega)}\le \norma{v}_{L^{k,1}(\Omega^\sharp)} \quad & \forall 0<k\le \frac{n}{2n-2}\\
	    \norma{u}_{L^{2k,2}(\Omega)}\le \norma{v}_{L^{2k,2}(\Omega^\sharp)} \quad & \forall 0<k\le \frac{n}{3n-4}
	\end{align*}
where $\norma{\cdot}_{L^{k,q}}$ is the so called \emph{Lorentz norm}, whose definition can be found in next section. Moreover, the authors in \cite{ANT} were able to establish a comparison \'a la Talenti, 
$$ u^\sharp(x)\le v(x), \quad \forall x \in \Omega^\sharp  $$
in the case $f\equiv 1$ and $n=2$.
This will be the starting point of our work: it will be clear that our results coincide with the one in \cite{ANT} in the case $p=2$.

For completeness sake, we cite that this wasn't the first result in this sense, indeed in \cite{BG2} the authors study a comparison result for the $p$-torsion, that is the case $f \equiv 1$, with a completely different argument, obtaining
$$\norma{u}_{L^1{\Omega}} \le \norma{v}_{L^1(\Omega^\sharp)}.$$
%\textcolor{red}{questa parte non mi piace, come la possiamo cambiare?} 

Another work that is worth to be mentioned is \cite{ACNT}, where the authors obtained similar results to \cite{ANT} in the case of mixed Dirichlet and Robin boundary conditions. 
 
This paper is organized as follows. In the next section, we give some basic notions about rearrangements of functions and Lorentz spaces. Moreover, we list some properties of the solutions to problems \eqref{p_originale} and \eqref{p_tondo}. In section \ref{section_2}, we prove the main results about comparison of the two solutions in terms of the Lorentz norm. 

In particular, we prove
\begin{teorema}
    \label{teorema1}
	Let $u$ and $v$ be the solutions to problem \eqref{p_originale} and \eqref{p_tondo} respectively. Then we have
	\begin{equation}
	    \label{diseq_f_generica}
	    \norma{u}_{L^{k,1}(\Omega)} \, \leq \norma{v}_{L^{k,1}(\Omega^\sharp)} \, \; \forall \, 0 < k \leq \frac{n(p-1)}{(n-1)p},
	\end{equation}
	\begin{equation}
	    \label{fgen2}
	    \norma{u}_{L^{pk,p}(\Omega)} \, \leq \norma{v}_{L^{pk,p}(\Omega^\sharp)} \, \; \forall \, 0 < k \leq \frac{n(p-1)}{(n-2)p +n}.
	\end{equation}
\end{teorema}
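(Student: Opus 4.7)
The plan is to follow the rearrangement technique of Talenti \cite{T} in its Robin adaptation by Alvino--Nitsch--Trombetti \cite{ANT}, adjusting every step to the $p$-Laplace structure. First, I would note that $u\ge 0$ (by a comparison principle for \eqref{p_originale} with $f\ge 0$) and plug into the weak formulation the truncation $\varphi_h(x)=\min\{(u(x)-t)_+/h,\,1\}$, which belongs to $W^{1,p}(\Omega)$. Letting $h\downarrow 0$ and using the coarea formula yields, for a.e.\ $t>0$,
\begin{equation*}
    \int_{\{u=t\}\cap\Omega}\abs{\nabla u}^{p-1}\,d\mathcal{H}^{n-1}+\beta\int_{\partial\Omega\cap\{u>t\}}u^{p-1}\,d\mathcal{H}^{n-1}=\int_{\{u>t\}}f\,dx\le F(\mu_u(t)),
\end{equation*}
where $F(s):=\int_0^s f^\sharp$, $\mu_u(t):=\abs{\{u>t\}}$, and the last step is Hardy--Littlewood.

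Next, I combine H\"older's inequality (with conjugate exponents $p$ and $p/(p-1)$) on the level set $\{u=t\}\cap\Omega$ with the coarea identity $-\mu_u'(t)=\int_{\{u=t\}\cap\Omega}\abs{\nabla u}^{-1}\,d\mathcal{H}^{n-1}$ and the classical isoperimetric inequality applied to $\{u>t\}\subset\R^n$, obtaining the differential bound
\begin{equation*}
    \bigl[n\omega_n^{1/n}\mu_u(t)^{(n-1)/n}-b_u(t)\bigr]_+^p\le (-\mu_u'(t))^{p-1}\bigl[F(\mu_u(t))-g_u(t)\bigr],
\end{equation*}
with $b_u(t):=\mathcal{H}^{n-1}(\partial\Omega\cap\{u>t\})$ and $g_u(t):=\beta\int_{\partial\Omega\cap\{u>t\}}u^{p-1}\,d\mathcal{H}^{n-1}$. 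Running the same scheme on $\Omega^\sharp$ forces equality throughout (H\"older is saturated because $\abs{\nabla v}$ is constant on the level sets of $v$, and the isoperimetric inequality is saturated because those level sets are balls), yielding the corresponding identity for $\mu_v$, with $b_v(t)$ and $g_v(t)$ vanishing whenever the super-level set is strictly contained in $\Omega^\sharp$.

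Finally, to prove \eqref{diseq_f_generica} I would switch to the rearrangement variable $s=\mu_u(t)$, solve the inequality for $-(u^*)'(s)$, and test it against the Lorentz weight $s^{1/k-1}$. Using the representation
\begin{equation*}
    \norma{u}_{L^{k,1}(\Omega)}=k\int_0^{\abs{\Omega}}s^{1/k}\bigl(-(u^*)'(s)\bigr)\,ds+k\,u^*(\abs{\Omega})\,\abs{\Omega}^{1/k},
\end{equation*}
the constraint $k\le n(p-1)/((n-1)p)$ is exactly what makes the weight $s^{1/k-1}$ compatible with the exponent $s^{-(n-1)p/(n(p-1))}$ coming from the isoperimetric factor, and it is what allows the boundary contributions $b_u$ and $g_u$ to be absorbed, in an integrated sense, into their counterparts for $v$. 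Estimate \eqref{fgen2} is obtained by the same scheme, starting however from the weak formulation tested against $(u-t)_+$, which produces the energy identity
\begin{equation*}
    \int_{\{u>t\}}\abs{\nabla u}^p\,dx+\beta\int_{\partial\Omega\cap\{u>t\}}u^{p-1}(u-t)\,d\mathcal{H}^{n-1}=\int_{\{u>t\}}f(u-t)\,dx,
\end{equation*}
and the analogous Lorentz-weight integration then forces the different threshold $k\le n(p-1)/((n-2)p+n)$. The main obstacle throughout is the boundary integral $g_u$: since $u$ is not constant on $\partial\Omega$, one cannot reproduce the pointwise Talenti inequality, and $g_u$ can be controlled only after multiplying the differential inequality by the appropriate Lorentz weight and using an integration by parts whose remaining boundary terms vanish precisely in the ranges of $k$ above. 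Both thresholds reduce to those of \cite{ANT} when $p=2$.
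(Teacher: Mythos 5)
Your overall outline (Talenti-style level-set analysis, truncation test functions, coarea + isoperimetric + H\"older, weighted integration in $t$) is in the right family, and the first energy identity you write down is indeed the starting point of the paper. However, the differential inequality you derive is \emph{not} the one the paper uses, and the difference matters. You apply H\"older only on $\partial U_t\cap\Omega$ and then subtract the boundary contributions, arriving at
\begin{equation*}
\bigl[n\omega_n^{1/n}\mu_u(t)^{(n-1)/n}-b_u(t)\bigr]_+^p\le(-\mu_u'(t))^{p-1}\bigl[F(\mu_u(t))-g_u(t)\bigr].
\end{equation*}
The paper instead applies H\"older on the \emph{whole} reduced boundary $\partial U_t$, writing $1=g^{1/p}g^{-1/p}$ with $g=\abs{\nabla u}^{p-1}$ on $\partial U_t\cap\Omega$ and $g=\beta u^{p-1}$ on $\partial U_t\cap\partial\Omega$, which yields
\begin{equation*}
\gamma_n\,\mu(t)^{\frac{(n-1)p}{n(p-1)}}\le\Bigl(\int_0^{\mu(t)}f^*\Bigr)^{\frac{1}{p-1}}\Bigl(-\mu'(t)+\beta^{-\frac{1}{p-1}}\int_{\partial U_t\cap\partial\Omega}\tfrac{1}{u}\,d\mathcal{H}^{n-1}\Bigr).
\end{equation*}
In this form the Robin boundary term sits \emph{additively next to $-\mu'(t)$} and is of the specific shape $\int 1/u$, which the paper then controls by a Fubini computation (their Lemma~2.5): $\int_0^\tau t^{p-1}\bigl(\int_{\partial U_t\cap\partial\Omega}1/u\bigr)\,dt\le\tfrac{1}{p\beta}\int_0^{\abs\Omega}f^*$, with equality for $v$. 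Your $b_u(t)=\mathcal{H}^{n-1}(\partial\Omega\cap\{u>t\})$ and $g_u(t)=\beta\int_{\partial\Omega\cap\{u>t\}}u^{p-1}$ do not integrate to a quantity that matches the spherical case by a simple Fubini argument, and the $[\cdot]_+$ truncation on the left makes your inequality vacuous for small $t$ when $b_u$ is comparable to $\text{Per}(\Omega)$, which is exactly the range $0\le t\le v_m$ that contributes $\abs{\Omega}^{1/k}v_m$ to the Lorentz norm.

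You also leave out the two ingredients that actually close the paper's argument. First, after multiplying by $t^{p-1}\mu(t)^\delta$ (with $\delta=\tfrac1k-\tfrac{(n-1)p}{n(p-1)}\ge0$; this is where the threshold on $k$ enters, as monotonicity of the weight in $\mu$, not as a cancellation of boundary terms) and integrating by parts, one lands on a Gronwall-type inequality $\tau\xi'(\tau)\le(p-1)\xi(\tau)+C$, and the comparison with the radial case is completed by the Gronwall lemma with $\tau_0=v_m$. Second, that comparison only works because $u_m\le v_m$ (so that $\mu(t)\le\phi(t)=\abs\Omega$ on $[0,v_m]$); this elementary but essential fact, proved by integrating the Robin condition over $\partial\Omega$, never appears in your sketch. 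Finally, for \eqref{fgen2} the paper does \emph{not} pass to the energy identity from the test function $(u-t)_+$: it multiplies the \emph{same} differential inequality by $t^{p-1}F(\mu(t))\mu(t)^{-\frac{(n-1)p}{n(p-1)}}$ and uses that $\ell\mapsto F(\ell)\ell^{-\frac{(n-1)p}{n(p-1)}}$ is nondecreasing exactly when $k\le\frac{n(p-1)}{(n-2)p+n}$, again followed by integration by parts and Gronwall. As written, your proposal therefore has real gaps at the three places that carry the proof: the shape of the differential inequality, the Gronwall closure with $\tau_0=v_m$, and the mechanism behind the second threshold.
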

We observe that from  Theorem \ref{teorema1}, we have that, if $p \geq n$
\begin{equation*}
    \norma{u}_{L^1(\Omega)}\le \norma{v}_{L^1(\Omega^\sharp)} \quad \text{and} \quad \norma{u}_{L^p(\Omega)}\le \norma{v}_{L^p(\Omega^\sharp)}.
\end{equation*}

\begin{teorema}
\label{teorema2}
	 Assume that $f\equiv 1$ and let $u$ and $v$ be the solutions to \eqref{p_originale} and \eqref{p_tondo} respectively. 
	\begin{enumerate}[$(i)$]
		\item If $\displaystyle{1 \leq p \leq \frac{n}{n-1}}$ then
		\begin{equation}
		\label{7}
		u^\sharp (x) \leq v(x) \qquad x \in \Omega^\sharp,
		\end{equation}
		\item if $\displaystyle{ p > \frac{n}{n-1}}$ \, and \, $ \displaystyle{0 <k \leq \frac{n(p-1)}{n(p-1)-p}}$ , then
		\begin{equation}
		\begin{aligned}
		\norma{u}_{L^{k,1}(\Omega)} &\leq \norma{v}_{L^{k,1}(\Omega^{\sharp})} \\ 
		\norma{u}_{L^{pk,p}(\Omega)} &\leq \norma{v}_{L^{pk,p}(\Omega^{\sharp})} .
		\end{aligned}
		\end{equation} 
	\end{enumerate}
\end{teorema}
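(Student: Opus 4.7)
The plan is to run a Talenti-style rearrangement argument on the distribution function $\mu_u(t):=|\{u>t\}|$ and compare it, pointwise or integrally, with the explicit ODE satisfied by its radial counterpart $\mu_v(t):=|\{v>t\}|$. The starting point, common to both parts, is to integrate the equation on the superlevel set $E_t=\{u>t\}$ and invoke the Robin condition, obtaining
\begin{equation*}
\mu_u(t)=\int_{\{u=t\}\cap\Omega}|\nabla u|^{p-1}\,d\mathcal{H}^{n-1}+\beta\int_{E_t\cap\partial\Omega}u^{p-1}\,d\mathcal{H}^{n-1}.
\end{equation*}
Combining the coarea formula $-\mu_u'(t)=\int_{\{u=t\}\cap\Omega}|\nabla u|^{-1}\,d\mathcal{H}^{n-1}$ with H\"older's inequality yields
\begin{equation*}
\mathcal{H}^{n-1}(\{u=t\}\cap\Omega)^{p}\le(-\mu_u'(t))^{p-1}\Bigl(\mu_u(t)-\beta\int_{E_t\cap\partial\Omega}u^{p-1}\,d\mathcal{H}^{n-1}\Bigr).
\end{equation*}
The Euclidean isoperimetric inequality applied to $E_t$ bounds $\mathcal{H}^{n-1}(\{u=t\}\cap\Omega)+\mathcal{H}^{n-1}(E_t\cap\partial\Omega)$ from below by $n\omega_n^{1/n}\mu_u(t)^{(n-1)/n}$, while the elementary bound $u\ge t$ on $E_t\cap\partial\Omega$ controls the Robin integral from below by $\beta t^{p-1}\mathcal{H}^{n-1}(E_t\cap\partial\Omega)$. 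These four ingredients produce a closed differential inequality for $\mu_u$ involving the single auxiliary parameter $y(t):=\mathcal{H}^{n-1}(E_t\cap\partial\Omega)$.

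For part $(i)$, I want to show that when $p\le n/(n-1)$ the auxiliary parameter $y(t)$ can be absorbed, forcing
\begin{equation*}
(-\mu_u'(t))^{p-1}\mu_u(t)\ge\bigl(n\omega_n^{1/n}\mu_u(t)^{(n-1)/n}\bigr)^{p},
\end{equation*}
which is exactly the ODE satisfied as an equality by $\mu_v$ on $t\ge v_m:=\min_{\partial\Omega^\sharp}v$. The absorption rests on a one-parameter algebraic inequality comparing $(A-y)^p$ with $A^p$ and the Robin penalty $\beta t^{p-1}y$, with $A:=n\omega_n^{1/n}\mu_u^{(n-1)/n}$; the exponent threshold $p=n/(n-1)$ is precisely where the scaling of $A$ in $\mu_u$ balances the scaling of the Robin penalty in $t$. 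An ODE comparison then yields $\mu_u(t)\le\mu_v(t)$ for every $t\ge v_m$, while for $t<v_m$ both functions equal $|\Omega|$; this is equivalent to the pointwise bound $u^\sharp(x)\le v(x)$ on $\Omega^\sharp$.

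For part $(ii)$ the above absorption breaks down, so I would not attempt a pointwise bound. Instead, following the strategy of Theorem~\ref{teorema1}, I would multiply the differential inequality by suitable powers of $t$ and integrate, then rewrite the result via the layer-cake formula as a weighted integral of $u^\sharp$ or $(u^\sharp)^{p}$ against the Lorentz weight $s^{1/k-1}$. The restriction $0<k\le n(p-1)/[n(p-1)-p]$ is the sharp integrability range in which the integrated inequality remains coercive after substituting $\mu_v$ for $\mu_u$ on the right; it is wider than in Theorem~\ref{teorema1} because for $f\equiv 1$ the Hardy-Littlewood step is saturated, so $\int_0^{\mu_u(t)}f^\sharp(s)\,ds=\mu_u(t)$ appears exactly rather than as an upper bound.

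The main obstacle I anticipate is the clean book-keeping of the two regimes $t<u_m$ and $t\ge u_m$, where $u_m:=\inf_{\partial\Omega}u$: in the first regime all of $\partial\Omega$ contributes, and testing the weak formulation with $\varphi\equiv 1$ forces $\beta\int_{\partial\Omega}u^{p-1}\,d\mathcal{H}^{n-1}=|\Omega|$ and $\mu_u(t)\equiv|\Omega|$; in the second regime only a portion of $\partial\Omega$ enters and must be delicately balanced against the internal perimeter of $E_t$. Pinning down that the resulting balancing inequality degenerates exactly at $p=n/(n-1)$, and that no slack remains in the Hardy-Littlewood step when $f\equiv 1$, is where I expect the argument to be most delicate.
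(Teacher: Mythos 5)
Your part $(ii)$ sketch is in the right spirit (multiply the distribution-function inequality by powers of $t$, integrate, and exploit that for $f\equiv 1$ the Hardy--Littlewood step is an equality), but your part $(i)$ strategy contains a genuine gap, and in fact the paper does \emph{not} proceed by a pointwise ODE comparison.

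Concretely, from the isoperimetric inequality, H\"older, and the layer-cake identity you obtain, with $A=n\omega_n^{1/n}\mu^{(n-1)/n}$, $D=-\mu'$, $y=\mathcal H^{n-1}(\partial U_t\cap\partial\Omega)$, the inequality
\[
(A-y)_+^p \;\le\; D^{p-1}\Bigl(\mu-\beta\textstyle\int_{\partial U_t^{\rm ext}}u^{p-1}\Bigr)\;\le\; D^{p-1}\mu - D^{p-1}\beta t^{p-1}y.
\]
To ``absorb $y$'' and conclude $A^p\le D^{p-1}\mu$ you need $A^p-(A-y)_+^p-D^{p-1}\beta t^{p-1}y\le 0$ for the particular $y=y(t)$. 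Call the left-hand side $F(y)$; then $F(0)=0$ and $F'(0)=pA^{p-1}-D^{p-1}\beta t^{p-1}$, so whenever the gradient $D$ is small relative to $A$ and $t$ one has $F'(0)>0$ and $F(y)>0$ for small $y>0$. Nothing in the hypotheses rules this out: $y(t)$, $D(t)$, $t$, $A(t)$ are not jointly constrained in a way that forces $F(y(t))\le 0$, and the threshold $p\le n/(n-1)$ plays no role in this algebra. So the claimed closed pointwise differential inequality $(-\mu')^{p-1}\mu\ge A^p$ for $t\ge v_m$ does not follow, and the ``ODE comparison'' step has no foundation.

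What the paper actually does is never attempt a pointwise bound on the Robin term. It uses the exact identity (Lemma \ref{lemma3.3}, a Fubini computation combined with the weak formulation with $\varphi\equiv 1$)
\[
\int_0^{\infty}\tau^{p-1}\Bigl(\int_{\partial U_\tau^{\rm ext}}\tfrac{1}{u}\,d\mathcal H^{n-1}\Bigr)d\tau=\frac{1}{p\beta}\int_0^{|\Omega|}f^\ast(s)\,ds,
\]
which is genuinely an \emph{integral} statement and cannot be localized in $t$. One then multiplies the differential inequality by $t^{p-1}\mu(t)^\delta$ with $\delta=-(1-\tfrac1n-\tfrac1p)\tfrac{p}{p-1}$ (here $p\le n/(n-1)$ enters precisely to make $\delta\ge 0$, so that $\mu^\delta\le|\Omega|^\delta$ and the boundary integral can be estimated by the identity above), integrates from $0$ to $\tau\ge v_m$, and feeds the resulting inequality into Gronwall's lemma (Lemma \ref{lemma_Gronwall}) with $\tau_0=v_m$. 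The equality case for $\phi$ (Remark \ref{oss}) plus the fact that $\mu\le\phi=|\Omega|$ for $t\le v_m$ closes the argument and yields $\mu(\tau)\le\phi(\tau)$ for $\tau\ge v_m$, hence $u^\sharp\le v$. Your ``scaling'' heuristic for the threshold $p=n/(n-1)$ does not match the actual mechanism, which is this sign condition on $\delta$. For part $(ii)$ you would similarly need the two Gronwall steps (once for the $L^{k,1}$ bound, once — with the auxiliary primitive $G$ — for the $L^{pk,p}$ bound) rather than a pointwise argument; you acknowledge the integral structure there, so the cleanest fix is to realize that part $(i)$ must be run through the same Gronwall machinery, not through an ODE comparison.
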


Then we explicitly observe that in the case $f\equiv 1$ from Theorem \ref{teorema2}   we have that 
\begin{equation*}
    \norma{u}_{L^1(\Omega)}\le \norma{v}_{L^1(\Omega^\sharp)} \quad \text{and} \quad \norma{u}_{L^p(\Omega)}\le \norma{v}_{L^p(\Omega^\sharp)} \qquad \text{ for } p>1.
\end{equation*}

While we have the point-wise comparison only for $\displaystyle{p \leq \frac{n}{n-1}}$.

In section \ref{section_3}, using tools from section \ref{section_2}, we give a new proof of the Faber-Krahn inequality with Robin boundary conditions in the case $p \geq n$.  This topic was already studied in the papers by Bucur, Giacomini, Daners and Trebeschi, \cite{BD}, \cite{BG} , \cite{BG2} and \cite{BGT} where the authors proved the Faber-Krahn inequality for the eigenvalues of the Laplacian, or  of the $p$-Laplacian, with Robin boundary conditions, for every $p >1$. Actually, the results in \cite{BD} are more general, since they hold for every $p>1$, but they are obtained with completely different tools than the ones contained in our paper.

Finally, in section \ref{section_4}, we provide some examples and open problems and we discuss the optimality of our results. 

\section{Notions and preliminaries}
\begin{definizione}
	Let $u: \Omega \to \R$ be a measurable function, the \emph{distribution function} of $u$ is the function $\mu : [0,+\infty[\, \to [0, +\infty[$ defined by
	$$
	\mu(t)= \abs{\Set{x \in \Omega \, :\,  \abs{u(x)} > t}}
	$$
\end{definizione}
Here, and in the whole paper, $\abs{A}$ stands for the $n$-dimensional Lebesgue measure of the set $A$.
\begin{definizione}
	Let $u: \Omega \to \R$ be a measurable function, the \emph{decreasing rearrangement} of $u$, denoted by $u^\ast$, is the distribution function of $\mu $. 
	
	\noindent The \emph{Schwarz rearrangement} of $u$ is the function $u^\sharp $ whose level sets are balls with the same measure as the level sets of $u$. The functions $u^\sharp$ and $u^*$ are linked by the relation
	$$u^\sharp (x)= u^*(\omega_n \abs{x}^n)$$
\end{definizione}
\noindent It is easily checked that $u$, $u^*$ e $u^\sharp$ are equi-distributed, so it follows that
$$ \displaystyle{\norma{u}_{L^p(\Omega)}=\norma{u^*}_{L^p(0, \abs{\Omega})}=\lVert{u^\sharp}\rVert_{L^p(\Omega^\sharp)}}.$$
An important propriety of the decreasing rearrangement is the Hardy- Littlewood inequaliy, that is
\begin{equation*}
 \int_{\Omega} \abs{h(x)g(x)} \, dx \le \int_{0}^{\abs{\Omega}} h^*(s) g^*(s) \, ds.
\end{equation*}

\noindent So, by choosing $h=\chi_{\left\lbrace\abs{u}>t\right\rbrace}$, one has

\begin{equation*}
\int_{\abs{u}>t} \abs{h(x)} \, dx \le \int_{0}^{\mu(t)} h^*(s) \, ds.
\end{equation*}

\begin{definizione}
Let $0<p<+\infty$ and $0<q\le +\infty$. The Lorentz space $L^{p,q}(\Omega)$ is the space of those functions such that the quantity:
\begin{equation*}
    \norma{g}_{L^{p,q}} =
    \begin{cases}
   	\displaystyle{ p^{\frac{1}{q}} \left( \int_{0}^{\infty}  t^{q} \mu(t)^{\frac{q}{p}}\, \frac{dt}{t}\right)^{\frac{1}{q}}} & 0<q<\infty\\
	 \displaystyle{\sup_{t>0} \, (t^p \mu(t))} & q=\infty
	\end{cases}
\end{equation*}
is finite.
\end{definizione}
	
\noindent Let us observe that for $p=q$ the Lorentz space coincides with the  $L^p$ space, as a consequence of the well known \emph{Cavalieri's Principle}

$$\int_\Omega \abs{g}^p =p \int_0^{+\infty} t^{p-1} \mu(t) \, dt.$$
	
\noindent See \cite{T3} for more details on Lorentz space.

 Let us consider the functional
\begin{equation*}
	\mathfrak{F}(w)=\frac{1}{p} \int_{\Omega} \abs{\nabla w}^p \, dx +\frac{\beta}{p}\int_{\partial\Omega} \abs{w}^p \, d\mathcal{H}^{n-1}(x) - \int_{\Omega} fw \, dx	
\end{equation*}
defined on $W^{1,p}(\Omega)$. This functional is well defined and its Euler-Lagrange equation is exactly \eqref{p_originale}. If we show that the functional admits a minimum, our problem will always have a solution.
\begin{enumerate}[$1)$]
    \item Let us show that the functional is bounded from below, indeed using the parametric Young inequality, we have
	\begin{equation*}
		\begin{aligned}
		\mathfrak{F}(u)&\ge \frac{1}{p} \int_{\Omega} \abs{\nabla u}^p \, dx + \frac{\beta}{p}\int_{\partial\Omega} \abs{u}^p \, d\mathcal{H}^{n-1}(x) - \frac{\varepsilon^p}{p} \int_{\Omega} \abs{u}^p \, dx -  \frac{1}{p' \varepsilon^{p'}}\int_{\Omega} \abs{f}^{p'} \, dx\\
		&\ge  \frac{1}{p} \left( \int_{\Omega} \abs{\nabla u}^p \, dx+ \beta \int_{\partial\Omega} \abs{u}^p\, d\mathcal{H}^{n-1}(x)\right) - \frac{\varepsilon^p}{p} \int_{\Omega} \abs{u}^p \, dx 
		-\frac{1}{p' \varepsilon^{p'}}\int_{\Omega} \abs{f}^{p'} \, dx\\
		&\ge  \frac{ \lambda_{1,\beta}(\Omega)- \varepsilon^p}{p} \int_{\Omega} \abs{u}^p \, dx -  \frac{1}{p' \varepsilon^{p'}}\int_{\Omega} \abs{f}^{p'} \, dx
		\end{aligned}
	\end{equation*}
	In the last inequality we used the Sobolev inequality with trace term $$
	\int_\Omega \abs{\nabla u}^p +\beta \int_{\partial \Omega } \abs{u}^p   \geq \lambda_{1,\beta}(\Omega) \int_\Omega \abs{u}^p.
	$$
	In general, the quantity $\lambda_{1,\beta}(\Omega)$ denotes the first eigenvalue of the $p$-Laplacian with Robin boundary conditions, whose definition is given in \eqref{trace}, which can be also seen as a trace constant of the set $\Omega$.
	
	If $\varepsilon$ is small enough, then the quantity
	$$ \frac{ \lambda_{1,\beta}(\Omega)- \varepsilon^p}{p}$$
	is non negative, and then
	\begin{equation*}
		\mathfrak{F}(u)\ge - \frac{1}{p' \varepsilon^{p'}}\int_{\Omega} \abs{f}^{p'}
	\end{equation*}
	so
	\begin{equation*}
		m=\inf_{W^{1,p}} \mathfrak{F}(u)> - \infty.
	\end{equation*}
	
	\item Compactness and lower semicontinuity.
	
	Let $\left\lbrace u_i\right \rbrace$ be a minimizing sequence. We can assume that $\mathfrak{F}(u_i)\le m+1$, $\forall i$. Using again the Young inequality, we have
	\begin{equation*}
		\begin{aligned}
		m+1 &\geq \frac{1}{p} \int_{\Omega} \abs{\nabla u_i}^p \, dx + \frac{\beta}{p}\int_{\partial\Omega} \abs{u_i}^p \, d\mathcal{H}^{n-1}(x)- \int_{\Omega} fu_i \, dx \\
		&\geq \frac{1}{p} \int_{\Omega} \abs{\nabla u_i}^p \, dx + \frac{\beta}{p}\int_{\partial\Omega} \abs{u_i}^p \, d\mathcal{H}^{n-1}(x)- \frac{\varepsilon^p}{p}\int_{\Omega} \abs{u_i}^p\, dx- \frac{1}{p' \varepsilon^{p'}}\int_{\Omega} \abs{f}^{p'} \, dx
		\end{aligned}
	\end{equation*}
	Then
	\begin{equation*}
	\begin{aligned}
	m+1 + \frac{1}{p' \varepsilon^{p'}}\int_{\Omega} \abs{f}^{p'} \, dx &\geq  \frac{1}{2p} \left( \int_{\Omega} \abs{\nabla u_i}^p \, dx +\beta \int_{\partial\Omega} \abs{u_i}^p \, d\mathcal{H}^{n-1}(x)\right)-\frac{\varepsilon^p}{p}\int_{\Omega} \abs{u_i}^p\, dx \\
	  &+\frac{1}{2p} \left( \int_{\Omega} \abs{\nabla u_i}^p \, dx +\beta \int_{\partial\Omega} \abs{u_i}^p \, d\mathcal{H}^{n-1}(x)\right) \\
	&\geq \frac{1}{2p} \int_{\Omega} \abs{\nabla u_i}^p \, dx + \left(\frac{\lambda_{1,\beta}(\Omega)- 2 \varepsilon^p}{2p}\right)\int_{\Omega} \abs{u_i}^p\, dx.
	\end{aligned}		
	\end{equation*}
	
	Then, the minimizing sequence $\left\lbrace u_i \right\rbrace$ is bounded in $W^{1,p}(\Omega)$, so there exists a subsequence $\Set{u_{i_k}}$ weakly converging in $W^{1,p}(\Omega)$ and strongly in $L^p(\Omega)$ to a function $u$. Let us show that $u$ is the minimum.
	
	The function $t^p$ is strictly convex for $p>1$, so
	\begin{gather}
	    \label{2} \abs{ u_{i_k}}^p \geq \abs{ u}^p + p \abs{u}^{p-2}  u( u_{i_k}-u) \\
	    \label{1} \abs{\nabla u_{i_k}}^p \geq \abs{\nabla u}^p + p \abs{\nabla u}^{p-2} \nabla u(\nabla u_{i_k}-\nabla u)
	\end{gather}

	Putting \eqref{2} e \eqref{1} in $\mathfrak{F}(u_{i_k})$, we obtain
	\begin{equation*}
	\begin{aligned}
			\int_{\Omega} f u_{i_k} \, dx + \mathfrak{F}(u_{i_k})& \geq \frac{1}{p}\int_{\Omega}\abs{\nabla u}^p \, dx + \int_{\Omega} \abs{\nabla u}^{p-2} \nabla u(\nabla u_{i_k}-\nabla u)\\
			&+\frac{\beta}{p} \int_{\partial\Omega} \abs{ u}^p \, d\mathcal{H}^{n-1}(x) + \beta \int_{\partial\Omega} \abs{u}^{p-2}  u( u_{i_k}-u) \, d\mathcal{H}^{n-1}
		\end{aligned}
	\end{equation*}
	\noindent Passing to the limit for $k\to \infty$, by the weak convergence of $\left\lbrace u_{i_k}\right\rbrace$ the integral over $\Omega$ on the right-hand side goes to 0. The integral over $\partial\Omega$ goes to 0 as well. Indeed, the space $W^{1,p}(\Omega)$ is compactly embedded in $L^p(\partial\Omega)$ (for more details, see \cite{N} 2.5), and $u_{i_k}-u\to 0$ in $L^p(\partial\Omega)$. So, we obtain
	\begin{equation*}
		m \geq \mathfrak{F}(u).
	\end{equation*} 
	\noindent This ensures us that $u$ is the minimum of the functional. 
	
	\noindent The uniqueness of the minimum follows from the fact that $ \mathfrak{F}(u)$ is the sum of a strictly convex part and a linear part.
\end{enumerate}

We observe that the solutions $u$ and $v$  to \eqref{p_originale} and \eqref{p_tondo} respectively are both $p$-superharmonic and then, by the strong maximum principle in \cite{V}, it follows that they achieve their minima on the boundary.
Denoting by $u_m$ and $v_m$ the minimum of $u$ and $v$ respectively, thanks to the positiveness of $\beta$ and the Robin boundary conditions, we have that $u_m \geq 0$ and $v_m \geq 0$. Hence $u$ and $v$ are strictly positive in the interior of $\Omega$.
Moreover we can observe that 
\begin{equation}
		\label{minima_eq}
		u_m = \min_\Omega u \leq  \min_{\Omega^\sharp} v= v_m,
	\end{equation}
in fact, if we consider

	\begin{equation*}
		\begin{split}
			 v_m^{p-1}  \text{Per}(\Omega^\sharp) &= \int_{\partial \Omega^\sharp} v(x)^{p-1} \, d\mathcal{H}^{n-1}(x)= \frac{1}{\beta}\int_{\Omega^\sharp} f^\sharp \, dx=\frac{1}{\beta} \int_{\Omega} f \, dx \\
			& = \int_{\partial \Omega} u(x)^{p-1} \, d\mathcal{H}^{n-1}(x) \\
			&\geq u_m^{p-1}  \text{Per}(\Omega)  \geq \abs u_m^{p-1} \text{Per}(\Omega^\sharp).
		\end{split}
	\end{equation*}
	A consequence of \eqref{minima_eq} that will be used in what follows is that
	\begin{equation}
		\label{mf}
		\mu (t) \leq \phi (t) = \abs{\Omega} \quad \forall t \leq v_m.
	\end{equation} 

\subsection{Useful lemmas}	
Let $u$ be the solution to \eqref{p_originale}. For $t\geq 0$, we denote by
$$U_t=\left\lbrace x\in \Omega : u(x)>t\right\rbrace \quad \partial U_t^{int}=\partial U_t \cap \Omega, \quad \partial U_t^{ext}=\partial U_t \cap \partial\Omega,$$
and by
$$\mu(t)=\abs{U_t} \quad P_u(t)=Per(U_t)$$
where $\abs{\cdot}$ is the Lebesgue measure on $\R^n$ and $Per(\cdot)$ is the perimeter. \\
If $v$ is the solution to \eqref{p_tondo}, using the same notations, we set
$$V_t=\left\lbrace x\in \Omega^\sharp : v(x)> t\right\rbrace, \quad \phi(t)=\abs{V_t}, \quad P_v(t)=Per(V_t).$$

Because of the invariance of the $p$-Laplacian and of the Schwarz rearrangement of $f$ by rotation, there exists a radial solution to \eqref{p_tondo} and, by uniqueness of solutions, this solution is $v$.

Since $v$ is radial, positive and decreasing along the radius then, for $0\le t\le v_m$, $V_t=\Omega^\sharp$, while, for $v_m<t<\max_{\Omega^\sharp}v$, $V_t$ is a ball, concentric to $\Omega^\sharp$ and strictly contained in it.

\begin{lemma}[Gronwall]
	\label{lemma_Gronwall}
	Let $\xi(t): [\tau_0 , + \infty[ \,\to \R$ be a continuous and differentiable function satisfying, for some non negative constant $C$, the following differential inequality
	$$
	\tau \xi' (\tau ) \leq (p-1) \xi(\tau) + C \quad \forall \tau \geq \tau_0 >0.
	$$
	
	\noindent Then we have
	\begin{itemize}
		\item[(i)] $\displaystyle{ \xi(\tau) \leq \left(\xi(\tau_0) + \frac{C}{p-1}\right) \left( \frac{\tau}{\tau_0}\right)^{p-1} - \frac{C}{p-1}   \quad \forall \tau \geq \tau_0}$;
		\item[(ii)]$\displaystyle{ \xi'(\tau) \leq \left( \frac{(p-1)\xi(\tau_0 )+ C}{\tau_0}\right) \left( \frac{\tau}{\tau_0}\right)^{p-2} \quad \forall \tau \geq \tau_0}$.
	\end{itemize}
\end{lemma}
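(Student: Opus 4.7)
The plan is to treat this as a standard first-order linear differential inequality and use the classical integrating-factor trick. Rewriting the hypothesis as
\begin{equation*}
\xi'(\tau) - \frac{p-1}{\tau}\,\xi(\tau) \;\leq\; \frac{C}{\tau} \qquad \forall \tau \geq \tau_0,
\end{equation*}
the natural integrating factor is $\tau^{-(p-1)}$, since then the left-hand side becomes the derivative of the product $\tau^{-(p-1)}\xi(\tau)$. Multiplying through gives
\begin{equation*}
\frac{d}{d\tau}\Bigl[\tau^{-(p-1)}\xi(\tau)\Bigr] \;\leq\; C\,\tau^{-p}.
\end{equation*}

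For part $(i)$, I would integrate this inequality from $\tau_0$ to $\tau$, using $\int_{\tau_0}^\tau s^{-p}\,ds = \frac{1}{p-1}\bigl(\tau_0^{1-p} - \tau^{1-p}\bigr)$, and then multiply both sides by $\tau^{p-1}$. After rearrangement the right-hand side collects into $\bigl(\xi(\tau_0) + \tfrac{C}{p-1}\bigr)(\tau/\tau_0)^{p-1} - \tfrac{C}{p-1}$, which is exactly the claimed bound. The only mildly delicate point is that the direction of the inequality is preserved since $\tau^{p-1}$ and the integrating factor are positive for $\tau \geq \tau_0 > 0$; this is where the hypothesis $\tau_0 > 0$ is used.

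For part $(ii)$, the idea is simply to feed the bound from $(i)$ back into the original differential inequality. Substituting
\begin{equation*}
(p-1)\xi(\tau) + C \;\leq\; (p-1)\!\left(\xi(\tau_0) + \frac{C}{p-1}\right)\!\left(\frac{\tau}{\tau_0}\right)^{p-1} - C + C
\end{equation*}
the two additive $C$'s cancel and we are left with $\bigl((p-1)\xi(\tau_0) + C\bigr)(\tau/\tau_0)^{p-1}$. Dividing by $\tau$ and rewriting $(\tau/\tau_0)^{p-1}/\tau = (\tau/\tau_0)^{p-2}/\tau_0$ yields exactly the bound in $(ii)$.

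I do not expect a serious obstacle here: the argument is the standard integrating-factor computation, and the statement $(ii)$ is a formal consequence of $(i)$ combined with the hypothesis. The only things to be careful about are the sign of $p-1$ (which is positive in our setting, $p>1$) and keeping track of the inequality direction when multiplying by positive powers of $\tau$.
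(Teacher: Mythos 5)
Your argument is correct and is essentially identical to the paper's: the paper also multiplies through by $\tau^{-p}$ (equivalently, divides by $\tau$ and applies the integrating factor $\tau^{-(p-1)}$), recognizes the left side as $\bigl(\xi(\tau)/\tau^{p-1}\bigr)'$, integrates from $\tau_0$ to $\tau$ for $(i)$, and obtains $(ii)$ by substituting $(i)$ back into the hypothesis.
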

\begin{proof}
    Dividing both sides of the differential inequality by $\tau^p$, we obtain
	\begin{equation*}
	\left(\frac{\xi'(\tau)}{\tau^{p-1}}-(p-1) \frac{\xi(\tau)}{\tau^p}\right) = \left(\frac{\xi(\tau)}{\tau^{p-1}}\right)' \leq \frac{C}{\tau^p}.
	\end{equation*}
	Now, we integrate from $\tau_0$ to $\tau$ and we obtain
	\begin{equation*}
	\begin{split}
	\int_{\tau_0}^\tau \left(\frac{\xi(\tau)}{\tau^{p-1}}\right)' \, d\tau &\leq \int_{\tau_0}^\tau \frac{C}{\tau^p} \, d\tau  \\
	\implies \xi(\tau) &\leq \left(\xi(\tau_0) + \frac{C}{p-1}\right) \left( \frac{\tau}{\tau_0}\right)^{p-1} - \frac{C}{p-1},
	\end{split}
	\end{equation*}
	which gives \emph{(i)}.
	
	\noindent In order to obtain \emph{(ii)}, we just take into account \emph{(i)} in the differential inequality.
\end{proof}

\begin{lemma} Let $u$ and $v$ be solutions to \eqref{p_originale} and \eqref{p_tondo} respectively. Then for almost every $t >0$ we have
	\begin{equation} 
	\label{3.2}
	\gamma_n \mu(t)^{\left(1-\frac{1}{n}\right)\frac{p}{p-1}} \leq \left(\int_0^{\mu (t)}f^\ast (s ) \, ds\right)^{\frac{1}{p-1}} \left( - \mu'(t) + \frac{1}{\beta ^{\frac{1}{p-1}}}\int_{\partial U_t^\text{ext}} \frac{1}{u} \, d\mathcal{H}^{n-1}(x)\right),
	\end{equation}
	where  $\displaystyle{\gamma_n= \left(n \omega_n^{1/n}\right)^{\frac{p}{p-1}}}$.
	
	\noindent And 
	\begin{equation} 
	\label{3.1}
	\gamma_n \phi(t)^{\left(1-\frac{1}{n}\right)\frac{p}{p-1}} = \left(\int_0^{\phi (t)}f^\ast (s ) \, ds\right)^{\frac{1}{p-1}} \left( - \phi'(t) + \frac{1}{\beta ^{\frac{1}{p-1}}}\int_{\partial V_t^\text{ext}} \frac{1}{v} \, d\mathcal{H}^{n-1}(x)\right).
	\end{equation}
\end{lemma}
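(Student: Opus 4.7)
The plan is to derive \eqref{3.2} by testing the weak formulation of \eqref{p_originale} against a Lipschitz truncation of $u$ at level $t$, then combining the resulting integral identity with the isoperimetric inequality and two applications of Hölder; the equality \eqref{3.1} will follow by the same steps, all of which saturate for the radial solution $v$.

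\textbf{A key identity.} First, I test \eqref{p_originale} with $\varphi_h(u)$, where $\varphi_h(s)=0$ for $s\le t$, $\varphi_h(s)=(s-t)/h$ for $t<s<t+h$ and $\varphi_h(s)=1$ for $s\ge t+h$. By coarea, as $h\to 0$, the bulk term $h^{-1}\int_{\{t<u<t+h\}}\abs{\nabla u}^p\,dx$ tends to $\int_{\partial U_t^{\text{int}}}\abs{\nabla u}^{p-1}\,d\mathcal{H}^{n-1}$, the Robin boundary contribution tends to $\beta\int_{\partial U_t^{\text{ext}}}u^{p-1}\,d\mathcal{H}^{n-1}$, and the source term tends to $\int_{U_t}f\,dx$. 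The Hardy--Littlewood inequality then yields
\begin{equation*}
\int_{\partial U_t^{\text{int}}}\abs{\nabla u}^{p-1}\,d\mathcal{H}^{n-1}+\beta\int_{\partial U_t^{\text{ext}}}u^{p-1}\,d\mathcal{H}^{n-1}=\int_{U_t}f\,dx\le\int_0^{\mu(t)}f^\ast(s)\,ds.
\end{equation*}

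\textbf{Isoperimetric coupling and Hölder.} Next I combine the Euclidean isoperimetric inequality $n\omega_n^{1/n}\mu(t)^{(n-1)/n}\le\mathcal{H}^{n-1}(\partial U_t^{\text{int}})+\mathcal{H}^{n-1}(\partial U_t^{\text{ext}})$ with a Hölder estimate on each boundary piece, obtained by splitting $1=\abs{\nabla u}^{(p-1)/p}\cdot\abs{\nabla u}^{-(p-1)/p}$ on $\partial U_t^{\text{int}}$ and $1=(\beta u^{p-1})^{1/p}\cdot(\beta u^{p-1})^{-1/p}$ on $\partial U_t^{\text{ext}}$, with conjugate exponents $p$ and $p/(p-1)$. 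A discrete Hölder of the form $a_1b_1+a_2b_2\le(a_1^p+a_2^p)^{1/p}(b_1^{p'}+b_2^{p'})^{1/p'}$ merges the two estimates: the $a^p$-sum is exactly the left-hand side of the identity above (hence bounded by $\int_0^{\mu(t)}f^\ast$), while the $b^{p'}$-sum, thanks to the coarea formula $-\mu'(t)=\int_{\partial U_t^{\text{int}}}\abs{\nabla u}^{-1}\,d\mathcal{H}^{n-1}$, equals $-\mu'(t)+\beta^{-1/(p-1)}\int_{\partial U_t^{\text{ext}}}u^{-1}\,d\mathcal{H}^{n-1}$. Raising the resulting bound to the power $p/(p-1)$ gives \eqref{3.2}.

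For $v$ the same derivation applies, but now every inequality saturates: $\abs{\nabla v}$ is constant on each $\partial V_t^{\text{int}}$ and $v$ is constant on $\partial V_t^{\text{ext}}$, so both Hölder steps become equalities; $V_t$ is a ball, so the isoperimetric inequality is sharp; and $\int_{V_t}f^\sharp\,dx=\int_0^{\phi(t)}f^\ast(s)\,ds$ by construction of the Schwarz rearrangement. Hence \eqref{3.1} holds as an identity. The only technical care lies in the bookkeeping of the $\beta$ factor through the Hölder splitting on $\partial U_t^{\text{ext}}$: the pairing must be chosen so that $a_2^p=\beta\int u^{p-1}$ matches the Robin boundary contribution of Step 1 while $b_2^{p'}=\beta^{-1/(p-1)}\int u^{-1}$ produces the boundary correction on the right-hand side of \eqref{3.2}. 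One also needs $u\ge u_m>0$ on $\partial U_t^{\text{ext}}$, which is guaranteed by the strong maximum principle recalled in \eqref{minima_eq}, so that $\int u^{-1}\,d\mathcal{H}^{n-1}$ makes sense.
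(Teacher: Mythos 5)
Your proposal is correct and follows essentially the same route as the paper's proof: test against the same truncation of $u$ at level $t$, pass to the limit $h\to 0$ to obtain $\int_{\partial U_t^{\text{int}}}\abs{\nabla u}^{p-1}+\beta\int_{\partial U_t^{\text{ext}}}u^{p-1}=\int_{U_t}f$, bound the right side by Hardy--Littlewood, then couple the isoperimetric inequality with H\"older on $\partial U_t$ and identify $\int_{\partial U_t^{\text{int}}}\abs{\nabla u}^{-1}$ with $-\mu'(t)$ by coarea, noting equality for the radial $v$. The paper applies a single H\"older on $\partial U_t$ with the piecewise weight $g$ whereas you do H\"older separately on the two boundary pieces and recombine with a discrete H\"older, but this yields the identical estimate, so the difference is purely presentational.
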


\begin{proof} Let $t >0$ e $h >0$, we choose the test function 
	\begin{equation*}
	\left.
	\varphi (x)= 
	\right.
	\begin{cases}
	0 & \text{ if }u < t \\
	u-t & \text{ if }t< u < t+h \\
	h & \text{ if }u > t+h.
	\end{cases}
	\end{equation*}
	
	\noindent Then,
	\begin{equation*}
	\begin{split}
	\int_{U_t \setminus U_{t+h}} \abs{\nabla u}^p\, dx &+ \beta h \int_{\partial U_{t+h}^{ ext }} u^{p-1}  \, d\mathcal{H}^{n-1}(x) + \beta \int_{\partial U_{t}^{ ext }\setminus \partial U_{t+h}^{ ext }} u^{p-1} (u-t) \, d\mathcal{H}^{n-1}(x) \\
	& =  \int_{U_t \setminus U_{t+h}} f (u-t ) \, dx + h \int_{U_{t+h}} f \, dx.
	\end{split}
	\end{equation*}
	Dividing by $h$, using coarea formula and letting $h$ go to 0, we have that for a. e. $t>0$
	\begin{equation*}
	\int_{\partial U_t} g(x) \, d\mathcal{H}^{n-1}(x) = \int_{U_{t}} f\, dx ,
	\end{equation*}
	where
	$$
	\left.
	g(x)= 
	\right.
	\begin{cases}
	\abs{\nabla u }^{p-1} & \text{ if }x \in \partial U_t^{ int },\\
	\beta u ^{p-1}& \text{ if }x \in \partial U_t^{ ext }
	.
	\end{cases}
	$$
	
	\noindent So, using the isoperimetric inequality, for a. e. $t>0$ we have 
	\begin{equation*}
	\begin{split}
	n \omega_n^{1/n} \mu(t)^{\left(1 - \frac{1}{n}\right)} &\leq P_u(t) = \int_{\partial U_t} \,  d\mathcal{H}^{n-1}(x)\leq \left(\int_{\partial U_t}g\, d\mathcal{H}^{n-1}(x)\right)^{\frac{1}{p}} \left(\int_{\partial U_t}\frac{1}{g^{\frac{1}{p-1}}} \, d\mathcal{H}^{n-1}(x)\right)^{1-\frac{1}{p}} \\
	&= \left(\int_{\partial U_t}g \, d\mathcal{H}^{n-1}(x)\right)^{\frac{1}{p}} \left( \int_{\partial U_t^{ int }}\frac{1}{\abs{\nabla u}}\, d\mathcal{H}^{n-1}(x) +\frac{1}{\beta^{\frac{1}{p-1}}} \int_{\partial U_t^{ ext }}\frac{1}{u} \,  d\mathcal{H}^{n-1}(x) \right)^{1-\frac{1}{p}} \\
	&\leq \left(\int_0^{\mu(t)} f^\ast (s) \, ds\right)^{\frac{1}{p}} \left( -\mu'(t) +\frac{1}{\beta^{\frac{1}{p-1}}} \int_{\partial U_t^{ ext }}\frac{1}{u} \,  d\mathcal{H}^{n-1}(x) \right)^{1-\frac{1}{p}} \quad t \in [0, \max_{\Omega} u ),
	\end{split}
	\end{equation*}
	Then \eqref{3.2} follows. We notice that if $v$ is the solution to \eqref{p_tondo}, than all the inequalities are verified as equalities, so we have \eqref{3.1}.
\end{proof}
\begin{lemma}
	\label{lemma3.3}
	For all $\tau \geq v_m $ we have
	\begin{equation}
	\label{3.11}
	\int_0^\tau t^{p-1} \left(\int_{\partial U_t^{ ext } } \frac{1}{ u(x) } \, d \mathcal{H}^{n-1}(x)\right) \, dt \leq  \frac{1}{p\beta} \int_0^{\abs{\Omega}} f^\ast(s) \,ds.
	\end{equation}
	Moreover,
	\begin{equation}
	\label{3.10}
	\int_0^\tau t^{p-1} \left(\int_{\partial V_t \cap \partial\Omega^\sharp } \frac{1}{ v(x) } \, d \mathcal{H}^{n-1}(x)\right) \, dt =  \frac{1}{p\beta} \int_0^{\abs{\Omega}} f^\ast(s) \,ds,
	\end{equation}
\end{lemma}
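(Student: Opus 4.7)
The plan is to prove both statements by the same mechanism: swap the order of integration, evaluate the inner time integral explicitly, and then connect the resulting boundary integral to the data via the weak formulation with the constant test function $\varphi \equiv 1$.

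For the inequality \eqref{3.11}, I would first rewrite the left-hand side using $\partial U_t^{\text{ext}} = \{x\in\partial\Omega : u(x)>t\}$ and Fubini to obtain
\begin{equation*}
\int_0^\tau t^{p-1}\!\!\int_{\partial U_t^{\text{ext}}}\!\!\frac{d\mathcal{H}^{n-1}}{u(x)}\,dt
= \int_{\partial\Omega}\frac{1}{u(x)}\int_0^{\min(\tau,u(x))} t^{p-1}\,dt\,d\mathcal{H}^{n-1}(x)
= \int_{\partial\Omega}\frac{\min(\tau,u(x))^p}{p\,u(x)}\,d\mathcal{H}^{n-1}(x).
\end{equation*}
Since $\min(\tau,u(x))\le u(x)$, the integrand is bounded above by $u(x)^{p-1}/p$. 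Next, I would test the weak formulation of \eqref{p_originale} with $\varphi\equiv 1$; the gradient term vanishes and, using $u\ge 0$, this gives $\beta\int_{\partial\Omega} u^{p-1}\,d\mathcal{H}^{n-1}=\int_\Omega f\,dx$, which equals $\int_0^{|\Omega|}f^\ast(s)\,ds$ by equidistribution. Combining these yields \eqref{3.11}.

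For the identity \eqref{3.10}, I would exploit the radial structure of $v$. Since $v$ is radial and decreasing, $V_t=\Omega^\sharp$ for $t\le v_m$ while $V_t\Subset \Omega^\sharp$ for $t>v_m$, so the boundary slice $\partial V_t\cap\partial\Omega^\sharp$ equals $\partial\Omega^\sharp$ for $t\le v_m$ and is empty for $t>v_m$. Using also that $v\equiv v_m$ on $\partial\Omega^\sharp$, and that $\tau\ge v_m$, the left-hand side of \eqref{3.10} collapses to
\begin{equation*}
\frac{\mathrm{Per}(\Omega^\sharp)}{v_m}\int_0^{v_m} t^{p-1}\,dt = \frac{v_m^{p-1}\mathrm{Per}(\Omega^\sharp)}{p}.
\end{equation*}
Testing the weak formulation of \eqref{p_tondo} with $\varphi\equiv 1$ gives $\beta v_m^{p-1}\mathrm{Per}(\Omega^\sharp)=\int_{\Omega^\sharp} f^\sharp\,dx = \int_0^{|\Omega|}f^\ast(s)\,ds$, which matches exactly the right-hand side of \eqref{3.10}.

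There is no real obstacle here: the argument is essentially Fubini plus constant-test-function bookkeeping. The only point that requires minor care is the handling of the cutoff $\min(\tau,u(x))$ in the inequality case, which is resolved by a trivial monotonicity step, and checking that the equality case really is equality — this follows because for the symmetric solution $v$, both the bound $\min(\tau,v)\le v$ and the hypothesis $\tau\ge v_m$ are saturated on the full boundary $\partial\Omega^\sharp$.
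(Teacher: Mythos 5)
Your proposal is correct and follows essentially the same route as the paper: Fubini on the boundary integral plus the constant-test-function identity $\beta\int_{\partial\Omega}u^{p-1}\,d\mathcal{H}^{n-1}=\int_\Omega f\,dx$. The only cosmetic difference is that you apply Fubini directly to the truncated integral and handle the cutoff via $\min(\tau,u(x))$, whereas the paper computes the full integral over $(0,\infty)$ first and then truncates by monotonicity (for $u$) or by the emptiness of $\partial V_t\cap\partial\Omega^\sharp$ for $t>v_m$ (for $v$).
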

\begin{proof}

	If we integrate the quantity 
	$$t^{p-1} \left(\int_{\partial U_t^{ ext } } \frac{1}{ u(x) } \, d \mathcal{H}^{n-1}(x)\right), $$
	from 0 to $+\infty$, by Fubini theorem, we obtain
	\begin{equation*}
	\begin{split}
	\int_0^\infty \tau^{p-1} \left(\int_{\partial U_\tau^{ ext } } \frac{1}{ u(x) } \, d \mathcal{H}^{n-1}(x)\right) \, d\tau&  =\int_{\partial \Omega} \left(\int_0^{u(x)} \frac{\tau^{p-1}}{u(x)} \, d\tau \right)\, d\mathcal{H}^{n-1}(x)\\
	&=\frac{1}{p}\int_{\partial \Omega} u(x)^{p-1}  \, d\mathcal{H}^{n-1}(x) \\
	&=  \frac{1}{p \beta} \int_0^{\abs{\Omega}}f^\ast (s) \, ds,
	\end{split}
	\end{equation*}
	where the last equality follows from the fact that $u$ solves \eqref{p_originale}.
	
	\noindent Analogously  
	\begin{equation*}
	\int_0^\infty \tau^{p-1} \left(\int_{\partial V_\tau \cap \partial\Omega^{\sharp} } \frac{1}{ v(x) } \, d \mathcal{H}^{n-1}(x)\right) \, d\tau = \frac{1}{p \beta} \int_0^{\abs{\Omega}}f^\ast (s) \, ds.
	\end{equation*}
	
	\noindent Since $u$ is positive, we obtain, $\forall t\ge0$,
	$$
	\int_0^t \tau^{p-1} \left(\int_{\partial U_\tau^{ ext } } \frac{1}{ u(x) } \, d \mathcal{H}^{n-1}(x)\right) \, d\tau \leq\frac{1}{p \beta} \int_0^{\abs{\Omega}}f^\ast (s) \, ds,
	$$
	on the other hand, since $\partial V_t \cap \partial \Omega^\sharp$ is empty for $t\geq v_m$, we have
	$$
	\int_0^t \tau^{p-1} \left(\int_{\partial V_\tau \cap \partial\Omega^{\sharp} } \frac{1}{ v(x) } \, d \mathcal{H}^{n-1}(x)\right) \, d\tau = \frac{1}{p \beta} \int_0^{\abs{\Omega}}f^\ast (s) \, ds.
	$$  
	and the proof of lemma \ref{lemma3.3} is complete.
\end{proof}
\begin{oss}
	\label{oss}
	It can be observed that, since $\partial V_t \cap \partial \Omega^\sharp$ is empty for $t\geq v_m$ and $\phi (t ) = \abs{\Omega}$ for $t \leq v_m $, for all $\delta>0$ and for all $t$, we have
	\begin{equation*}
	\begin{split}
	&\int_0^t \tau^{p-1}\phi(\tau)^\delta \left(\int_{\partial 	V_\tau \cap \partial\Omega^{\sharp} } \frac{1}{ v(x) } \, d \mathcal{H}^{n-1}(x)\right) \, d\tau = \\ 
	&\int_0^{v_m} \tau^{p-1}\phi(\tau)^\delta \left(\int_{\partial 	V_\tau \cap \partial\Omega^{\sharp} } \frac{1}{ v(x) } \, d \mathcal{H}^{n-1}(x)\right) \, d\tau = \\
	&\int_0^{+\infty} \tau^{p-1}\phi(\tau)^\delta \left(\int_{\partial 	V_\tau \cap \partial\Omega^{\sharp} } \frac{1}{ v(x) } \, d \mathcal{H}^{n-1}(x)\right) \, d\tau =
	\frac{\abs{\Omega}^\delta}{p \beta} \int_0^{\abs{\Omega}}f^\ast (s) \, ds.
	\end{split}
	\end{equation*}
\end{oss}

\section{Main results}
\label{section_2}

Now we prove Theorem \ref{teorema1} and Theorem \ref{teorema2} .
\begin{proof}[Proof of Theorem \ref{teorema1}]
	Let $\displaystyle{0< k \leq \frac{n(p-1)}{p(n-1)}}$, so $\displaystyle{\delta = \frac{1}{k} -\frac{(n-1)p}{n(p-1)}} $ is positive.
	
	\noindent Multiplying \eqref{3.2} by $t^{p-1} \mu (t)^\delta$ and integrating from $0 $ to $\tau\geq v_m$, by the previous Lemma, we obtain
	\begin{equation}
		\label{3.12}
		\begin{split}
			\int_0^\tau \gamma_n t^{p-1}\mu(t)^{\frac{1}{k}} \, dt &\leq  	\int_0^\tau \left(- \mu'(t) \right) t^{p-1}\mu(t)^\delta \left(\int_0^{\mu (t)}f^\ast (s ) \, ds\right)^{\frac{1}{p-1}} \, dt \\ 
			&+ \frac{\abs{\Omega}^\delta}{p\beta ^{\frac{p}{p-1}}} \left(\int_0^{\abs{\Omega}}f^\ast (s ) \, ds\right)^{\frac{p}{p-1}}.	
		\end{split} 
	\end{equation}

\noindent Setting $\displaystyle{F(l)= \int_0^l \omega^\delta \left(\int_0^\omega f^\ast (s) \, ds \right)^{\frac{1}{p-1}} \, d\omega}$, we can integrate by parts both sides of the last inequality, getting
\begin{equation*}
	\begin{split}
		\tau^{p-1} \left(\left(\int_0^\tau \gamma_n \mu(t)^{\frac{1}{k}} \, dt\right)+ F(\mu(\tau)) \right) &\leq  (p-1)  \int_0^\tau t^{p-2} \left(\left(\int_0^t \gamma_n \mu(s)^{\frac{1}{k}}\, ds\right)+ F(\mu(t))\right)\, dt \\ 
		&+\frac{\abs{\Omega}^\delta}{p\beta ^{\frac{p}{p-1}}} \left(\int_0^{\abs{\Omega}}f^\ast (s ) \, ds\right)^{\frac{p}{p-1}}.
	\end{split} 
\end{equation*}

\noindent Setting $\displaystyle{\xi(\tau)= \int_0^\tau t^{p-2} \left(\int_0^t \gamma_n \mu(s)^{\frac{1}{k}}\, ds+ F(\mu(t))\right)\, dt}$ and $\displaystyle{C=\frac{\abs{\Omega}^\delta}{p\beta ^{\frac{p}{p-1}}} \left(\int_0^{\abs{\Omega}}f^\ast (s ) \, ds\right)^{\frac{p}{p-1}}}$, we are in the hypothesis of Lemma \ref{lemma_Gronwall} (Gronwall), namely
$$
\tau \xi'(\tau ) \leq (p-1)\xi(\tau) +C,
$$
so, choosing $\tau_0= v_m$, we have
\begin{equation*}
	\begin{multlined}
		\tau^{p-2} \left(\int_0^\tau \gamma_n \mu(s)^{\frac{1}{k}}\, ds+ F(\mu(\tau))\right) \leq \left( \frac{(p-1)\xi(v_m )+ C}{v_m}\right) \left( \frac{\tau}{v_m}\right)^{p-2},
	\end{multlined} 
\end{equation*}
where
$$
\xi(v_m)= \int_0^{v_m} t^{p-2} \left(\int_0^t \gamma_n \mu(s)^{\frac{1}{k}}\, ds+ F(\mu(t))\right)\, dt.
$$
\noindent The previous inequality becomes an equality if we replace $\mu(t)$ with $\phi(t)$. Since $\mu(t) \leq \phi(t)=\abs{\Omega}, \quad \forall t \leq v_m$,  and $F(l)$ is monotone, we 
obtain
$$
\int_0^{v_m} t^{p-2} \left(\int_0^t \gamma_n \mu(s)^{\frac{1}{k}}\, ds+ F(\mu(t))\right)\, dt\leq \int_0^{v_m} t^{p-2} \left(\int_0^t \gamma_n \phi(s)^{\frac{1}{k}}\, ds+ F(\phi(t))\right)\, dt,
$$
hence
$$
\int_0^\tau \gamma_n \mu(s)^{\frac{1}{k}}\, ds+ F(\mu(\tau))\leq \int_0^\tau \gamma_n \phi(s)^{\frac{1}{k}}\, ds+ F(\phi(\tau)).
$$

\noindent Passing to the limit as $\tau\to \infty$, we get
$$
\int_0^\infty \mu(t)^{\frac{1}{k}} \, dt \leq \int_0^\infty \phi(t)^{\frac{1}{k}} \, dt ,
$$
and hence
$$
\norma{u}_{L^{k,1}(\Omega)} \leq \norma{v}_{L^{k,1}(\Omega^\sharp)} \quad \forall \, 0 \, < k \leq \frac{n(p-1)}{p(n-1)} .
$$

\noindent To prove the inequality \eqref{fgen2}, it is enough to show that
\begin{equation}
	\label{basta}
		\int_0^\infty t^{p-1} \mu(t)^{\frac{1}{k}} \, dt \leq \int_0^\infty  t^{p-1} \phi(t)^{\frac{1}{k}} \, dt.
\end{equation}

\noindent Let us consider equation \eqref{3.12}, let us integrate by parts the first term on the right-hand side from 0 to $\tau$ and then let us pass to the limit as $\tau\to \infty$, we have
$$ 
\int_0^\infty \gamma_n t^{p-1}\mu(t)^{\frac{1}{k}} \, dt \leq  	(p-1)\int_0^\infty t^{p-2} F(\mu(t)) \, dt + \frac{\abs{\Omega}^\delta}{p\beta ^{1+ \frac{1}{p-1}}} \left(\int_0^{\abs{\Omega}}f^\ast (s ) \, ds\right)^{\frac{p}{p-1}}.
$$
Therefore, if we show that
\begin{equation}
    \label{scorciatoia}
    \int_0^\infty t^{p-2} F(\mu(t)) \, dt \leq \int_0^\infty t^{p-2} F(\phi(t)) \, dt
\end{equation}
we obtain the \eqref{basta}. To this aim, we multiply \eqref{3.2} by $\displaystyle{t^{p-1}F(\mu(t)) \mu(t)^{-\frac{(n-1)p}{n(p-1)} }}$ and integrate. First, we observe that, by the choice $\displaystyle{k \leq \frac{n(p-1)}{(n-2)p+n}}$, it follows that the function $\displaystyle{h(l)= F(l)l^{-\frac{(n-1)p}{n(p-1)}}}$  is non decreasing. Hence, we obtain
\begin{equation*}
	\begin{split}
		\int_0^\tau \gamma_n t^{p-1}F(\mu(t)) \, dt &\leq  	\int_0^\tau \Bigl(- \mu'(t) \Bigr) t^{p-1}\mu(t)^{-\frac{(n-1)p}{n(p-1)} } F(\mu(t)) \left(\int_0^{\mu (t)}f^\ast (s ) \, ds\right)^{\frac{1}{p-1}} \, dt \\
		&+F(\abs{\Omega})\frac{\abs{\Omega}^{-\frac{(n-1)p}{n(p-1)} }}{p\beta ^{\frac{p}{p-1}}} \left(\int_0^{\abs{\Omega}}f^\ast (s ) \, ds\right)^{\frac{p}{p-1}}.	
	\end{split} 
\end{equation*}

\noindent If we integrate by parts both sides of the last expression and set
$$\displaystyle{C=F(\abs{\Omega})\frac{\abs{\Omega}^{-\frac{p(n-1)}{n(p-1)} }}{p\beta ^{\frac{p}{p-1}}} \left(\int_0^{\abs{\Omega}}f^\ast (s ) \, ds\right)^{\frac{p}{p-1}}},$$
\noindent we obtain
\begin{equation}
\label{Solidus_Snake}
\tau \int_0^{\tau} \gamma_n t^{p-2} F (\mu(t)) \, dt + \tau H_\mu(\tau) \leq \int_{0}^{\tau} \int_0^t r^{p-2} F(\mu(r)) \, dr dt+ \int_0^{\tau} H_\mu(t) \, dt +C
\end{equation}
where
\[
H_\mu(\tau)=-\int_{\tau}^{+\infty} t^{p-2} \mu(t)^{-\frac{p(n-1)}{n(p-1)}} F(\mu(t)) \biggl( \int_0^{\mu(t)} f^*(s) \, ds \biggr)^{\frac{1}{p-1}} \, d\mu(t).
\]
Setting
\begin{equation*}
	\begin{multlined}
		\xi(\tau)=\int_0^\tau \int_0^t \gamma_n r^{p-2}F(\mu(r)) \, dr +\int_0^t H_{\mu}(t) \, dt  %r^{p-2}\mu(r)^{-\frac{n-1}{n} \frac{p}{p-1}} F(\mu(r)) \left(\int_0^{\mu (r)}f^\ast (s ) \, ds\right)^{\frac{1}{p-1}} \, d\mu(r) \Biggr)\, dt
	\end{multlined} 
\end{equation*}
then \eqref{Solidus_Snake} becomes
$$
\tau \xi'(\tau ) \leq \xi(\tau) +C.
$$
So lemma \ref{lemma_Gronwall}, with $\tau_0=v_m$, gives
$$\int_{0}^{\tau} \gamma_n t^{p-2} F(\mu(t)) \, dt + H_\mu(\tau) \le \left( \frac{\displaystyle{(p-1)\int_{0}^{v_m} t^{p-2} F(\mu(t) \, dt +H_\mu(v_m) +C}}{v_m}\right) \left( \frac{\tau}{v_m}\right)^{p-2}$$

\noindent Of course, the inequality holds as an equality if we replace $\mu(t)$ with $\phi(t)$, so we get, keeping in mind that  $\mu(t)\le \phi(t)= \abs{\Omega} $  for $t \leq v_m$,
$$
\int_{0}^{\tau} \gamma_n t^{p-2} F(\mu(t) \, dt + H_\mu(\tau)\le \int_{0}^{\tau} \gamma_n F(\phi(t)) \, dt +H_\phi(\tau)
$$
Letting $\tau \to \infty$, one has
$$
	\int_0^\infty t^{p-2} F(\mu(t)) dt \leq \int_0^\infty t^{p-2} F(\phi(t)) dt,
$$
as %che il secondo integrale in $\xi'(\tau)$ tende a $0$ per $\tau \to \infty$.
$H_\mu(\tau), H_\phi(\tau) \to 0$. This proves \eqref{scorciatoia}, and hence \eqref{fgen2}.

\noindent The fact that both $H_\mu$ and $H_\phi$ go to 0 as $\tau$ goes to infinity can be easily deduced distinguishing the cases. 
\begin{itemize}
	\item If $p \geq 2$
	\begin{align*}
	t^{p-2} \mu(t)&=\int_{u>t} t^{p-2} \, dx \leq \int_{u>t} u^{p-2} \, dx \leq \norma{u}_{L^p}^{p-2} \mu(t)^{\frac{2}{p}} \\
	\Rightarrow \abs{H_\mu(\tau)}&=\int_{\tau}^{+\infty} t^{p-2} F(\mu(t)) \mu(t)^{-\frac{p(n-1)}{n(p-1)}} \biggl( \int_0^{\mu(t)} f^*(s) \, ds \biggr) (-\mu'(t)) \, dt \\
	&\leq \biggl(\int_0^{\abs{\Omega}} f^*(s) \, ds\biggr) \norma{u}_{L^p}^{p-2} \int_{\tau}^{+\infty} F(\mu(t))  \mu(t)^{\frac{2}{p}-\frac{p(n-1)}{n(p-1)}-1} (-\mu'(t)) \, dt \xrightarrow{\tau \to +\infty} 0.
	\end{align*}
	
	\item If $p <2$
	\begin{align*}
	\abs{H_\mu(\tau)}&=\int_{\tau}^{+\infty} t^{p-2} F(\mu(t)) \mu(t)^{-\frac{p(n-1)}{n(p-1)}} \biggl( \int_0^{\mu(t)} f^*(s) \, s \biggr) (-\mu'(t)) \, dt \\
	&\leq \tau^{p-2} \int_{\tau}^{+\infty}  F(\mu(t)) \mu(t)^{-\frac{p(n-1)}{n(p-1)}} \biggl( \int_0^{\mu(t)} f^*(s) \, s \biggr) (-\mu'(t)) \, dt \xrightarrow{\tau \to +\infty} 0.
	\end{align*}
	and analogously for $H_\phi$, which concludes the proof. \qedhere
\end{itemize}
\end{proof}

\begin{proof}[Proof of Theorem \ref{teorema2}]{\color{white} .}

	\begin{enumerate}[$(i)$]
		\item Firstly, we observe that $\displaystyle{\int_0^{\mu(t)} f^\ast(s) \, ds= \mu(t)}$, so \eqref{3.2} becomes
		\begin{equation}
		\label{f=1}
		\gamma_n \mu(t)^{\left(1-\frac{1}{n}-\frac{1}{p}\right)\frac{p}{p-1}} \leq - \mu'(t) + \frac{1}{\beta ^{\frac{1}{p-1}}}\int_{\partial U_t^\text{ext}} \frac{1}{u} \, d\mathcal{H}^{n-1}(x).
		\end{equation}
		
		\noindent Let us multiply both sides by $t^{p-1} \mu(t)^\delta$, where $\delta = -\left( 1- \frac{1}{n}-\frac{1}{p}\right)\frac{p}{p-1}$. We point out that $\delta\geq 0$ for $p \leq \frac{n}{n-1}$. Hence, integrating from $0$ to $\tau \geq v_m$, we have
		\begin{equation}
		\label{20}
		\begin{split}
		\int_0^\tau \gamma_n t^{p-1} &\leq \int_0^\tau t^{p-1} \mu(t)^\delta (- \mu'(t)) \, dt + \frac{1}{\beta ^{ \frac{1}{p-1}}}\int_0^\tau t^{p-1}  \mu(t)^\delta\int_{\partial U_t^\text{ext}} \frac{1}{u} \, d\mathcal{H}^{n-1}(x) \\
		&\leq \int_0^\tau t^{p-1} \mu(t)^\delta (- \mu'(t)) \, dt +  \frac{\abs{\Omega}^{\delta+1}}{p \beta ^{\frac{p}{p-1}}}
		\end{split}
		\end{equation}
		
		\noindent Taking into account remark \ref{oss}, if we replace $\mu(t)$ with $\phi(t)$ the previous inequality holds as equality.
		
		\noindent Hence, we get
		\begin{equation*}
		\int_0^\tau t^{p-1} \mu(t)^\delta (- \mu'(t)) \, dt  \geq \int_0^\tau t^{p-1} \phi(t)^\delta (- \phi'(t)) \, dt .
		\end{equation*}
		
		\noindent Then an integration by parts gives
		\begin{equation*}
		-\tau^{p-1} \frac{\mu(\tau)^{\delta + 1}}{\delta +1} + (p-1) \int_0^\tau t^{p-2}\frac{\mu(t)^{\delta + 1}}{\delta +1} \, dt \geq 	-\tau^{p-1} \frac{\phi(\tau)^{\delta + 1}}{\delta +1} + (p-1) \int_0^\tau t^{p-2}\frac{\phi(t)^{\delta + 1}}{\delta +1}\, dt.
		\end{equation*}
		
		\noindent Finally, using Gronwall's Lemma with the function $\displaystyle{\xi(\tau)=\int_0^\tau s^{p-2}\left(\frac{\mu(s)^{\delta + 1}-\phi(s)^{\delta + 1}}{\delta +1}\right)\, ds}$ we obtain
		
		$$ \displaystyle{\tau^{p-2} \left(\frac{\mu^{\delta +1}(\tau)- \phi^{\delta+1}(\tau)}{\delta+1}\right)\le (p-1) \frac{\tau^{p-2}}{v_m^{p-2}}\int_0^{v_m} s^{p-2} \left(\frac{\mu^{\delta +1}(s)- \phi^{\delta+1}(s)}{\delta+1}\right) \, ds.  }$$
		
		 The quantity on the right-hand side is non-positive, thanks to \eqref{minima_eq}, so
		 \begin{equation*}
		\mu (\tau) \leq \phi (\tau) \quad \forall \tau \geq v_m.
		\end{equation*} 
		and, remembering that,
		$$
		\mu (\tau) \leq \phi (\tau) = \abs{\Omega} \quad \forall \tau \leq v_m,
		$$
		we get the point-wise inequality of the functions.
		
		\item Now we want to show that
		\[
		\norma{u}_{L^{k,1}(\Omega)} \leq \norma{v}_{L^{k,1}(\Omega^{\sharp})}
		\]
		so it is enough to show 
		\begin{equation}
		\label{k1}
		\int_0^{+\infty} \mu(t)^{\frac{1}{k}} \, dt \leq \int_0^{+\infty} \phi(t)^{\frac{1}{k}} \, dt
		\end{equation}
		
		\noindent We multiply \eqref{f=1} by $t^{p-1} \mu(t)^{\frac{1}{k}-\left( 1- \frac{1}{n}-\frac{1}{p}\right)\frac{p}{p-1}}$ and integrate from $0$ to $\tau \geq v_m$. Then using Lemma \ref{lemma3.3} and Remark \ref{oss}, we obtain
		\begin{equation}
		\label{Batman}
		\int_0^{\tau} \gamma_n t^{p-1} \mu(t)^{\frac{1}{k}} \, dt \leq \int_0^{\tau} t^{p-1} \mu(t)^{\frac{1}{k}-\left( 1- \frac{1}{n}-\frac{1}{p}\right)\frac{p}{p-1}} (-\mu'(t)) \, dt+ \frac{\abs{\Omega}^{\frac{1}{k}-\left( 1- \frac{1}{n}-\frac{1}{p}\right)\frac{p}{p-1} +1}}{p \beta^{\frac{p}{p-1}}}
		\end{equation}
		and equality holds if we replace $\mu$ with $\phi$.
		In order to be shorter, we set
		\[
		\eta=\frac{1}{k}-\left( 1- \frac{1}{n}-\frac{1}{p}\right)\frac{p}{p-1}, \quad C=\frac{\abs{\Omega}^{\eta +1}}{p \beta^{\frac{p}{p-1}}}.
		\]
		We point out that \eqref{Batman} follows by \eqref{20} if $\eta\ge  0$, namely
		$$0<k \leq \frac{n(p-1)}{n(p-1)-p}$$
		With these notations and keeping in mind that $\mu$ is a non increasing function, we have from \eqref{Batman} taht
		\begin{equation}
		\label{Big_Boss}
		\int_0^{\tau} \gamma_n t^{p-1} \mu(t)^{\frac{1}{k}} \, dt \leq \int_0^{\tau} -t^{p-1} \mu(t)^{\eta} \, d\mu(t)+C
		\end{equation}
		Let us set $\displaystyle{G(\ell)=\int_0^{\ell} w^{\eta} \, dw=\frac{\ell^{\eta+1}}{\eta+1}}$, let us integrate by parts both sides of \eqref{Big_Boss} in order to obtain
		\begin{equation}
		\label{Metal_Gear}
		\begin{split}
		    &\gamma_n \tau^{p-1} \int_0^{\tau} \mu(t)^{\frac{1}{k}} \, dt+\tau^{p-1} G(\mu(\tau))  \\
		    &\leq (p-1) \biggl[ \int_0^{\tau} \gamma_n t^{p-2} \int_0^t \mu(r)^{\frac{1}{k}} \, dr dt+\int_0^{\tau} t^{p-2} G(\mu(t)) \, dt \biggr] +C
		\end{split}
		\end{equation}
		Setting
		\[
		\xi(\tau)=\int_0^{\tau} \left(\gamma_n t^{p-2} \int_0^t \mu(r)^{\frac{1}{k}} \, dr\right) \, dt+\int_0^{\tau} t^{p-2} G(\mu(t)) \, dt
		\]
		\eqref{Metal_Gear} reads as follows
		\[
		\tau \xi'(\tau) \leq (p-1) \xi(\tau)+C
		\]
		Hence, using Gronwall's Lemma \ref{lemma_Gronwall} with $\tau_0=v_m$, we get
		\[
		\gamma_n \tau^{p-2} \int_0^{\tau} \mu(t)^{\frac{1}{k}} \, dt+\tau^{p-2} G (\mu(\tau)) \leq \biggl( \frac{(p-1)\xi(v_m)+C}{v_m} \biggr) \biggl( \frac{\tau}{v_m} \biggr)^{p-2}
		\]
		where 
		$$ \displaystyle{ \xi(v_m)=\int_0^{v_m} \gamma_n t^{p-2} \int_0^t \mu(r)^{\frac{1}{k}} \, dr \, dt+ \int_0^{v_m} t^{p-2} G(\mu(t)) \, dt     } $$
		Again, if we replace $\mu$ with $\phi$, the previous inequality holds as an equality and $\xi(v_m)$ is less or equal than the same quantity obtained by replacing $\mu$ with $\phi$, as \eqref{minima_eq} holds. Keeping in mind \eqref{mf}, we have
		\[
		\tau^{p-2}\left(\gamma_n  \int_0^{\tau} \mu(t)^{\frac{1}{k}} \, dt+ G (\mu(\tau)) \right) \leq \tau^{p-2} \left(\gamma_n  \int_0^{\tau} \phi(t)^{\frac{1}{k}} \, dt+G (\phi(\tau))\right)
		\]
		Passing to the limit as $\tau  \to + \infty$, we get
		\[
		\int_0^{+\infty} \mu(t)^{\frac{1}{k}} \, dt \leq  \int_0^{+\infty} \phi(t)^{\frac{1}{k}} \, dt
		\]
		namely \eqref{k1}.
		
		\vspace{1 em}
		
		\noindent To conclude the proof, we have to show that
		\[
		\norma{u}_{L^{pk,p}(\Omega)} \leq \norma{v}_{L^{pk,p}(\Omega^{\sharp})} \qquad \forall \, 0<k \leq \frac{n(p-1)}{n(p-1)-p}
		\]
		that is to say
		\[
		\int_0^{+\infty} t^{p-1} \mu(t)^{\frac{1}{k}} \, dt \leq \int_0^{+\infty} t^{p-1} \phi(t)^{\frac{1}{k}} \, dt.
		\]
		We consider \eqref{Big_Boss}, pass to the limit as $\tau \to +\infty$ and integrate by parts the first term on the right-hand side
		\[
		\int_0^{+\infty} \gamma_n t^{p-1} \mu(t)^{\frac{1}{k}} \, dt \leq (p-1) \int_0^{+\infty} t^{p-2} G(\mu(t)) \, dt +C.
		\]
		Hence it is enough to show that
		\[
		\int_0^{+\infty} t^{p-2} G(\mu(t)) \, dt \leq \int_0^{+\infty} t^{p-2} G(\phi(t)) \, dt.
		\]
		To this aim, we multiply \eqref{f=1} by $t^{p-1} G(\mu(t)) \mu(t)^{-\left(1-\frac{1}{n}-\frac{1}{p}\right)\frac{p}{p-1}} $ and integrate from $0$ to $\tau \geq v_m$
		\begin{align*}
		\int_0^{\tau} \gamma_n t^{p-1} G(\mu(t)) \, dt &\leq \int_0^{\tau} t^{p-1} G(\mu(t)) \mu(t)^{-\left(1-\frac{1}{n}-\frac{1}{p}\right)\frac{p}{p-1}}  \, d\mu(t) \\
		& + \frac{1}{\beta^{\frac{1}{p-1}}} \int_0^{\tau} t^{p-1} G(\mu(t)) \mu(t)^{-\left(1-\frac{1}{n}-\frac{1}{p}\right)\frac{p}{p-1}} \biggl( \int_{\partial U_t^{ ext }} \frac{1}{u} \, d\mathcal{H}^{n-1} \biggr) \, dt
		\end{align*}
		Since $\displaystyle{k \leq \frac{n(p-1)}{n(p-1)-p}}$, using Lemma \ref{lemma3.3} and the fact that the function $G(\ell)\ell^{-\left(1-\frac{1}{n}-\frac{1}{p}\right)\frac{p}{p-1}} $ is non decreasing, we obtain
		\begin{equation}
		\label{Ocelot}
		\int_0^{\tau} \gamma_n t^{p-1} G(\mu(t)) \, dt \leq \int_0^{\tau} t^{p-1} G(\mu(t)) \mu(t)^{-\left(1-\frac{1}{n}-\frac{1}{p}\right)\frac{p}{p-1}}  \, d\mu(t)+ C
		\end{equation}
		with
		\[
		C=\frac{1}{p \beta^{\frac{p}{p-1}}} G (\abs{\Omega}) \abs{\Omega}^{-\left(1-\frac{1}{n}-\frac{1}{p}\right)\frac{p}{p-1}+1} 
		\]
		If we replace $\mu$ with $\phi$ the previous inequality holds as an equality, thanks to \eqref{oss}. 
		Now, let us integrate by parts both sides of \eqref{Ocelot},  obtaining
		\begin{equation}
		\label{Solid_Snake}
		\tau \int_0^{\tau} \gamma_n t^{p-2} G(\mu(t)) \, dt+\tau H(\tau) \leq \int_0^{\tau} \int_0^t \gamma_n t^{p-2} G(\mu(r)) \, dr dt+\int_0^{\tau} H_\mu(t) \, dt+C
		\end{equation}
		where 
		\[
		H_\mu(\tau)=-\int_{\tau}^{+\infty} t^{p-2} G(\mu(t)) \mu(t)^{-\left(1-\frac{1}{n}-\frac{1}{p}\right)\frac{p}{p-1}}  \, d\mu(t)
		\]
		Setting
		\[
		\xi(\tau)=\int_0^{\tau} \int_0^t \gamma_n t^{p-2} G(\mu(r)) \, dr dt+\int_0^{\tau} H_\mu(t) \, dt
		\]
		the \eqref{Solid_Snake} reads as follows
		\[
		\tau \xi'(\tau) \leq \xi(\tau)+C
		\]
		Again, using Gronwall's Lemma \ref{lemma_Gronwall}, we get
		$$ \displaystyle{\int_0^\tau \gamma_n t^{p-2} G(\mu(t)) \,  dt+ H_\mu(\tau) \le \left( \frac{(p-1) \xi(v_m) +C}{v_m} \right)  \left( \frac{\tau}{v_m}\right)^{p-2} }$$
		with
		$$ \xi(v_m)= \int_0^{v_m} \int_0^t \gamma_n t^{p-2} G(\mu(r)) \, dr dt+\int_0^{v_m} H_\mu(t) \, dt.
 $$
		
		Keeping in mind that for $\phi$ the previous inequalities hold as equality and the fact that $G$ is not decreasing, $\xi(v_m)$ is less or equal to the same quantity obtained by replacing $\mu$ with $\phi$. Hence, we obtain
		$$\displaystyle{ \int_0^\tau \gamma_n t^{p-2} G(\mu(t)) \,  dt+ H_\mu(\tau) \le \int_0^\tau \gamma_n t^{p-2} G(\phi(t)) \,  dt+ H_\phi(\tau)  }$$
		and passing to the limit as $\tau \to +\infty$, we finally get
		\[
		\int_0^{+\infty} t^{p-2} G(\mu(t)) \, dt \leq  \int_0^{+\infty} t^{p-2} G(\phi(t)) \, dt
		\]
		indeed, as in the proof of Theorem \ref{teorema1}, $H_\mu(\tau)$ and $H_\phi(\tau)$ go to 0 as $\tau \to \infty$.
		%Infatti
		%\begin{equation*}
		%\begin{gathered}
		%t^{p-2} \mu (t) = \int_{u>t} t^{p-2} dx \leq \int_{u>t} u^{p-2} dx \leq \norma{u}_{L^p}^{p-2} \mu(t) ^{\frac{2}{p}}, \\
		%\begin{multlined}
		%	 \abs{H(\tau)} =  \int_\tau ^\infty t^p-2  G(\mu(t)) \mu(t)^{\alpha}  \, (-\mu'(t)) \, dt \leq\\ \norma{u}_{L^p}^{p-2}  \int_\tau ^\infty  G(\mu(t)) \mu(t)^{\frac{2}{p}+\alpha-1}  \, (-\mu'(t)) \, dt \xrightarrow{\tau \to \infty}0.
		%\end{multlined}
		%\end{gathered}
		%\end{equation*} 
		That concludes the proof. \qedhere
	\end{enumerate}
	
\end{proof}

%\begin{corollario}
%	Siano $u$ e $v$ rispettivamente soluzioni di \eqref{p_originale} e \eqref{p_tondo}, con $f\equiv 1$. Allora abbiamo
%	\begin{equation}
%	
	%\begin{gathered}
	%\norma{u}_{L^{1}(\Omega)} \leq %\norma{v}_{L^{1}(\Omega^\sharp)} \quad \forall p > 1,\\
	%\norma{u}_{L^{p}(\Omega)} \leq %\norma{v}_{L^{p}(\Omega^\sharp)}  \quad \forall p \geq 2.
	%\end{gathered}
	%\end{equation}
%\end{corollario}

\begin{corollario}
\label{corollario2}
	Let $u$ and $v$ be the solutions to \eqref{p_originale} and \eqref{p_tondo} respectively. Then, if $p \geq n$, we have
	\begin{equation*}
	\norma{u}_{L^1(\Omega)}\le \norma{v}_{L^1(\Omega^\sharp)} \quad \text{and} \quad \norma{u}_{L^p(\Omega)}\le \norma{v}_{L^p(\Omega^\sharp)}.
	\end{equation*}
	
	Moreover in the case $f \equiv 1$, Theorem \ref{teorema2} gives 
\begin{equation*}
    \norma{u}_{L^1(\Omega)}\le \norma{v}_{L^1(\Omega^\sharp)} \quad \text{and} \quad \norma{u}_{L^p(\Omega)}\le \norma{v}_{L^p(\Omega^\sharp)} \qquad \forall p>1
\end{equation*}

and the point-wise comparison for $\displaystyle{p \leq \frac{n}{n-1}}$.
\end{corollario}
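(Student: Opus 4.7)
The plan is to read off the corollary as the specialization $k=1$ of the Lorentz-norm estimates already established in Theorems \ref{teorema1} and \ref{teorema2}, combined with the elementary identification $L^{p,p}(\Omega)=L^p(\Omega)$ recalled after the definition of Lorentz spaces via Cavalieri's principle.

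For the first assertion (arbitrary non-negative $f$, $p\ge n$) I would take $k=1$ in the two inequalities of Theorem \ref{teorema1}. The admissibility condition for \eqref{diseq_f_generica} becomes $1\le \frac{n(p-1)}{(n-1)p}$, which rearranges to $np-p\ge np-n$, i.e.\ $p\ge n$; together with $L^{1,1}=L^1$ this gives the $L^1$ bound. The admissibility condition for \eqref{fgen2} becomes $1\le \frac{n(p-1)}{(n-2)p+n}$, which likewise rearranges to $p\ge n$, and $L^{p,p}=L^p$ then yields the $L^p$ bound.

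For the case $f\equiv 1$ I would split according to the two regimes of Theorem \ref{teorema2}. If $p>\frac{n}{n-1}$, part $(ii)$ with $k=1$ applies as soon as $1\le \frac{n(p-1)}{n(p-1)-p}$; since $n(p-1)-p<n(p-1)$ whenever the denominator is positive, the inequality reduces exactly to $n(p-1)-p>0$, i.e.\ $p>\frac{n}{n-1}$. This delivers both the $L^1$ and the $L^p$ comparisons in that range. If instead $1<p\le \frac{n}{n-1}$, then part $(i)$ gives the pointwise inequality $u^\sharp(x)\le v(x)$ on $\Omega^\sharp$, and integrating this together with the equidistribution $\norma{u}_{L^q(\Omega)}=\norma{u^\sharp}_{L^q(\Omega^\sharp)}$ for $q=1,p$ produces both Lebesgue comparisons at once. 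The two ranges cover every $p>1$, and the remaining pointwise claim for $p\le \frac{n}{n-1}$ is precisely Theorem \ref{teorema2}$(i)$.

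No real obstacle is expected: the whole argument is the identification of diagonal Lorentz spaces with Lebesgue spaces plus an arithmetic verification that $k=1$ sits inside the admissible interval exactly when $p\ge n$ (general $f$), respectively when $p>\frac{n}{n-1}$ (for $f\equiv 1$), with the remaining subcritical range handled by the pointwise bound. The only point requiring a small comment is the strict positivity of the denominator $n(p-1)-p$ in the second case, which is what forces the dichotomy around $p=\frac{n}{n-1}$.
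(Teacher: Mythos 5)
Your proposal is correct and follows essentially the same route as the paper: specialize the Lorentz-norm inequalities of Theorems \ref{teorema1} and \ref{teorema2} to $k=1$, check that $k=1$ lies in the admissible range precisely when $p\ge n$ (respectively $p>\frac{n}{n-1}$ for $f\equiv1$, with the subcritical range covered by the pointwise bound), and identify $L^{p,p}=L^p$. The paper's proof is just a terser statement of the same observation, so there is nothing further to reconcile.
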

\begin{proof}
    If $p \geq n$ the upper bounds of $k$, in both cases \eqref{diseq_f_generica} e \eqref{fgen2}, are greater than $1$ and so we can choose $k=1$. The assertion follows from the fact that
    $$
    \norma{\cdot}_{L^{p,p}(\Omega)} = \norma{\cdot}_{L^{p}(\Omega)}.
    $$
    
    \noindent Analogously if $f \equiv 1$.
\end{proof}

\section{Faber–Krahn inequality}
\label{section_3}

We recall that the first eigenvalue of $p$-Laplace operator with Robin boundary conditions is obtained as the minimum of the Rayleigh quotients, i.e.,

\begin{equation}
\label{trace}
\lambda_{1,\beta} (\Omega)= \min_{\substack{\omega \in W^{1,p}(\Omega) \\ \omega\ne 0}} \frac{\displaystyle{\int_{\Omega} \abs{\nabla \omega}^p \, dx + \beta \int_{\partial \Omega} \abs{\omega}^p \, d\mathcal{H}^{n-1}(x)}}{\displaystyle{\int_{\Omega} \abs{\omega}^p \, dx}}.
\end{equation}

We can observe that if $u$ achieves the minimum of Rayleigh quotients, so does $\abs{u}$. From this we have that $u$ in non-negative. Furthermore, we have if $u_{\lambda_{1}} \geq 0$, as a consequence of Harnack inequality, $u_{\lambda_{1}} >0$.

Another important thing is that the eigenvalue is simple. Indeed, as shown in \cite{L}, if $\Omega$ is smooth enough and $u$ and $v$ are to eigenfunctions referred to the first eigenvalue, we can choose as test function $\displaystyle{\varphi_1 = \frac{u^p - v^p}{ u^{p-1}}}$ in
$$
\int_{\Omega} \abs{\nabla u}^{p-2}\nabla u \nabla \varphi_1 \, dx + \beta \int_{\partial \Omega} u^{p-1} \varphi_1 \, d\mathcal{H}^{n-1}(x) = \int_{\Omega} \lambda_1 u^{p-1} \varphi_1 \, dx
$$
and $\displaystyle{\varphi_2 = \frac{v^p - u^p}{ v^{p-1}}}$ in 
$$
\int_{\Omega} \abs{\nabla v}^{p-2}\nabla v \nabla \varphi_2 \, dx + \beta \int_{\partial \Omega} v^{p-1} \varphi_2 \, d\mathcal{H}^{n-1}(x) = \int_{\Omega} \lambda_1 v^{p-1} \varphi_2 \, dx.
$$

\noindent Summing the two equations, we have 
\begin{equation*}
    \begin{split}
       0&= \int_\Omega \left\lbrace 1 + (p-1) \left(\frac{v}{u}\right)^p\right\rbrace \abs{\nabla u}^p + \left\lbrace 1 + (p-1) \left(\frac{u}{v}\right)^p\right\rbrace \abs{\nabla v}^p\\
       & - \int_{\Omega} p \left( \frac{v}{u} \right)^{p-1} \abs{\nabla u}^{p-2} \nabla u \nabla v +p \left( \frac{u}{v} \right)^{p-1} \abs{\nabla v}^{p-2} \nabla v \nabla u \\
       & = \int_{\Omega} (u^p-v^p) \left( \abs{\nabla \log{u}}^p -\abs{\nabla \log{v}}^p\right) \\
        &- \int_\Omega p v^p \abs{\nabla \log{u}}^{p-2} \abs{\nabla \log{u}} \left( \nabla\log{v} - \nabla\log{u} \right) \\
       & - \int_\Omega p u^p \abs{\nabla \log{v}}^{p-2} \abs{\nabla \log{v}} \left( \nabla\log{u} - \nabla\log{v} \right)
    \end{split}
\end{equation*}
Now, using the well known inequalities, which hold true for each $w_1$ and $w_2\in \R^n$ ,
\begin{equation}
\label{lindt}
    \begin{gathered}
        \abs{w_2}^p\ge \abs{w_1}^p+ p \abs{w_1}^{p-2}w_1\cdot (w_2-w_1)+ \frac{\abs{w_2-w_1}^p}{2^{p-1}-1} \quad \text{if} \quad p\ge 2\\
        \abs{w_2}^p\ge \abs{w_1}^p+ p \abs{w_1}^{p-2}w_1\cdot (w_2-w_1)+ C(p)\frac{\abs{w_2-w_1}^2}{(\abs{w_1}+\abs{w_2})^{2-p}} \quad \text{if} \quad 1<p< 2,
    \end{gathered}
\end{equation}
if we consider the case $p \geq 2$, we choose $w_2= \nabla\log u$ and $w_1= \nabla\log v$, we obtain 
\begin{equation*}
    \frac{1}{2^{p-1}-1} \int_\Omega \left( \frac{1}{v^p}+ \frac{1}{u^p}\right) \abs{v \nabla u - u \nabla v}^p =0.
\end{equation*}
Hence, we obtain that $v \nabla u = u \nabla v$ a.e. in $\Omega$, and so there exists a constant $K$ for which $u=Kv$. This means that $\lambda_1$ is simple. 

\noindent For the proof of \eqref{lindt}, we refer to \cite{L}.

\noindent The following corollary of Theorem \ref{teorema1} holds true, that is Faber-Krahn inequality.

\begin{corollario}
	Let $u$ and $v$ be the solutions to \eqref{p_originale} and \eqref{p_tondo}, respectively. Then , if $p \geq n$, we have
	
	\begin{equation*}
	\lambda_{1,\beta} (\Omega) \geq \lambda_{1,\beta}(\Omega^\sharp).
	\end{equation*}
\end{corollario}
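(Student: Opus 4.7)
The plan is to apply Corollary \ref{corollario2} to a carefully chosen right-hand side built from a first eigenfunction on $\Omega$, and then to insert the symmetrized solution into the Rayleigh quotient on $\Omega^\sharp$ as a test function. Concretely, let $u$ be the (positive) first eigenfunction of the Robin $p$-Laplacian on $\Omega$, and set $f := \lambda_{1,\beta}(\Omega)\, u^{p-1}$. Then $u$ itself is the unique weak solution to \eqref{p_originale} with this right-hand side. Since $u \geq 0$, the Schwarz rearrangement satisfies $f^\sharp = \lambda_{1,\beta}(\Omega) (u^\sharp)^{p-1}$, and I denote by $v$ the corresponding radial solution to \eqref{p_tondo}.

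Next I would test the weak formulation of \eqref{p_tondo} against $v$ itself, obtaining
$$
\int_{\Omega^\sharp} \abs{\nabla v}^p \, dx + \beta \int_{\partial \Omega^\sharp} v^p \, d\mathcal{H}^{n-1}(x) = \lambda_{1,\beta}(\Omega) \int_{\Omega^\sharp} (u^\sharp)^{p-1} v \, dx.
$$
Hölder's inequality with exponents $p/(p-1)$ and $p$, together with the fact that the Schwarz rearrangement preserves $L^p$-norms, bounds the right-hand side by $\lambda_{1,\beta}(\Omega)\, \norma{u}_{L^p(\Omega)}^{p-1} \norma{v}_{L^p(\Omega^\sharp)}$. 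Under the hypothesis $p \geq n$, Corollary \ref{corollario2} yields the crucial comparison $\norma{u}_{L^p(\Omega)} \leq \norma{v}_{L^p(\Omega^\sharp)}$, whence
$$
\int_{\Omega^\sharp} \abs{\nabla v}^p \, dx + \beta \int_{\partial \Omega^\sharp} v^p \, d\mathcal{H}^{n-1}(x) \leq \lambda_{1,\beta}(\Omega) \, \norma{v}_{L^p(\Omega^\sharp)}^p.
$$

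Dividing through by $\norma{v}_{L^p(\Omega^\sharp)}^p$ and using $v$ as an admissible competitor in the variational characterization \eqref{trace} of $\lambda_{1,\beta}(\Omega^\sharp)$ then yields $\lambda_{1,\beta}(\Omega^\sharp) \leq \lambda_{1,\beta}(\Omega)$. The only genuinely dimension-restricted ingredient is the appeal to Corollary \ref{corollario2}, which is exactly why this strategy works precisely in the regime $p \geq n$; outside of it the required $L^p$-comparison between $u$ and $v$ is not available from Theorem \ref{teorema1}, and this proof scheme collapses (explaining why the authors recover only the subcase already treated in \cite{BD,BG,BG2,BGT}).
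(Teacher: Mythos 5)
Your proposal is correct and follows essentially the same route as the paper's own proof: take $f = \lambda_{1,\beta}(\Omega) u^{p-1}$ so that the eigenfunction $u$ itself solves \eqref{p_originale}, let $z$ (your $v$) solve the symmetrized problem on $\Omega^\sharp$, test the weak formulation against $z$, apply H\"older together with the $L^p$ comparison from Corollary \ref{corollario2}, and insert $z$ into the Rayleigh quotient \eqref{trace} on $\Omega^\sharp$. The only cosmetic difference is that the paper expresses $\lambda_{1,\beta}(\Omega)$ directly as a ratio with $\int_{\Omega^\sharp}(u^\sharp)^{p-1}z$ in the denominator and then bounds the denominator, whereas you rearrange and divide at the end; the mathematical content is identical.
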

\begin{proof}
	Let $u$ an eigenfunction referred to the first eigenvalue of \eqref{p_originale}, then it solves
	\begin{equation*}
	\begin{cases}
	-\Delta_p u= \lambda_{1,\beta}(\Omega) \, \abs{u}^{p-2} u & \text{ in } \Omega \\
	\abs{\nabla u}^{p-2} \displaystyle{\frac{\partial u}{\partial \nu}} + \beta  \abs{u}^{p-2}u =0  & \text{ on } \partial \Omega.
	\end{cases}
	\end{equation*}
	Now, let $z$ be a solution to the following problem 
	\begin{equation*}
	\begin{cases}
	-\Delta_p z= \lambda_{1,\beta}(\Omega) \, \lvert u^\sharp \rvert^{p-2} u^\sharp & \text{ in } \Omega^\sharp \\
	\abs{\nabla z}^{p-2} \displaystyle{\frac{\partial z}{\partial \nu}} + \beta  \abs{z}^{p-2}z =0  & \text{ on } \partial \Omega^\sharp.
	\end{cases}
	\end{equation*} 
	\noindent In that case, corollary \ref{corollario2} gives
	
	$$
	\int_{\Omega} \abs{u}^p \, dx = \int_{\Omega^\sharp}\abs{u^\sharp}^p \, dx \leq \int_{\Omega^\sharp} \abs{z}^p \, dx,
	$$
	and hence, by H\"older inequality 
	$$
	\int_{\Omega^\sharp}(u^\sharp)^{p-2} u^\sharp z \, dx\leq \left(\int_{\Omega^\sharp}\abs{u^\sharp}^p\, dx \right)^{\frac{p-1}{p}}  \left(\int_{\Omega^\sharp} z^p \, dx\right)^{\frac{1}{p}}\leq \int_{\Omega^\sharp} z^p \, dx. 
	$$
	
	\noindent Therefore, observing that we can write the eigenvalue $\lambda_{1,\beta}(\Omega)$ in the following way, we obtain
	\begin{equation*}
	\begin{split}
	\lambda_{1,\beta}(\Omega)&= \frac{\displaystyle{\int_{\Omega^\sharp} \abs{\nabla z}^p \, dx + \beta \int_{\partial \Omega^\sharp} z^p \, d\mathcal{H}^{n-1}(x)}}{\displaystyle{\int_{\Omega^\sharp} (u^\sharp)^{p-2} u^\sharp  z \, dx}} \\
	&\geq \frac{\displaystyle{\int_{\Omega^\sharp} \abs{\nabla z}^p \, dx + \beta \int_{\partial \Omega^\sharp} z^p \, d\mathcal{H}^{n-1}(x)}}{\displaystyle{\int_{\Omega^\sharp} z^p \, dx}} \geq\lambda_{1,\beta}(\Omega^\sharp).
	\end{split}
	\end{equation*}
\end{proof}

 \section{Conclusions}
\label{section_4}

We have been able to extend the results obtained for the Laplacian to the $p$-Laplacian.
Many problems remain open, such as

\textbf{Open Problem}  In the assumptions of Theorem \ref{teorema2}, does the point-wise comparison hold also for $p>\frac{n}{n-1}$?

We have already observed in the corollary \ref{corollario2} that if $p\geq n$ we have an estimate on the $L^p$ norms of $u$ and $v$. Can we generalize this estimate also for $q\neq p$? We know for sure that for $q=\infty$ this can't be done, as it can be seen in the following example. 
\begin{esempio} Let $\Omega\subseteq \R^n$ be the union of two disjoint balls,
$B_1$ and $B_r$ with radii 1 and $r$ respectively. We choose $\displaystyle{\beta< \left( \frac{n-1}{p-1} \right)^{p-1} }$ with $p\neq n $, and we fix $f=1$ on $B_1$ and $f=0$ on $B_r$. Both $u$ and $v$ can be explicitly computed. We have $\norma{u}_\infty - \norma{v}_\infty = C r^n + o(r^n)$, where $C$ is a positive constant. 

\end{esempio} 
\noindent \emph{Proof.} 
 We want an explicit expression of $u$ and $v$ respectively. 
\noindent Starting from $u$, it is a solution to
\begin{equation*}
\begin{cases}
-\div(\abs{\nabla u}^{p-2} \nabla u)= f & \text{ in } \Omega \\
\abs{\nabla u}^{p-2} \displaystyle{\frac{\partial u}{\partial \nu}} + \beta  \abs{u}^{p-2}u =0  & \text{ on } \partial \Omega.
\end{cases}
\end{equation*}
with $f\rvert_{B_1}= 1$ and $f\rvert_{B_r}= 0$.

\noindent It's clear that $u \rvert_{B_r}=0$ and $u(x)=u(\abs{x})$ it's radial on $B_1$.

\noindent So the equation \eqref{p_originale} becomes
$$
s^{n-1} \Delta_p u(s)= \frac{d}{ds}\left( s^{n-1} \abs{u'(s)}^{p-2} u'(s)\right)
$$
and then
\begin{align*}
    \frac{d}{ds} \left( s^{n-1} \abs{u'(s)}^{p-2} u'(s)\right) &=s^{n-1} \Delta_p u(s)=-s^{n-1} \\
    s^{n-1} \abs{u'(s)}^{p-2} u'(s) &=- \frac{s^n}{n} + c. \\
\end{align*}

\noindent We set $c=0$, in order to have a $C^1$-solution. 
$$
\abs{u'(s)}^{p-2} u'(s)=- \frac{s}{n} \implies  u'(s) = - \frac{s^{\frac{1}{p-1}}}{n^{\frac{1}{p-1}}} \qquad \alpha=\frac{1}{p-1}.
$$
If we integrate, we obtain
$$
u(s)= - \frac{p-1}{n^\alpha  p } s^{\frac{p}{p-1}} + A.
$$
The Robin boundary conditions become
$$
\abs{u'(1)}^{p-2} u'(1) + \beta u(1)^{p-1}=0 \quad (u\ge0),
$$
now we can compute the value of $A$
$$
-\frac{1}{n} + \beta \left( - \frac{p-1}{n^\alpha  p } + A \right)^{p-1}=0 \implies A= \frac{1}{(n\beta)^\alpha} + \frac{p-1}{n^\alpha  p }.
$$
So
	\begin{equation*}
u(s)=  \frac{p-1}{n^\alpha  p }\left(1-s^{\frac{p}{p-1}}\right)  + \frac{1}{(n \beta)^\alpha}.
	\end{equation*}
As $u$ is decreasing, we have 
$$
\norma{u}_\infty =  u(0)= \frac{p-1}{n^\alpha  p }+\frac{1}{(n \beta)^\alpha}. 
$$

\noindent Now, let us compute $v(s)$. We will do this firstly for $s\in (0,1)$, then for $s\in (1, \overline{r})$ where $\overline{r}= (1 + r^n)^{\frac{1}{n}}$ is determined by the condition $\abs{\Omega}= \vert\Omega^\sharp\vert$.

\noindent Let $s<1$
$$
\frac{d}{ds}\left( s^{n-1} \abs{v'(s)}^{p-2} v'(s)\right)=-s^{n-1}
$$
$$
\abs{v'(s)}^{p-2} v'(s)=- \frac{s}{n} \implies v'(s) = - \frac{s^{\frac{1}{p-1}}}{n^\alpha}
$$
$$
v(s)= - \frac{p-1}{n^\alpha  p } s^{\frac{p}{p-1}} + B.
$$
Now we can't determine $B$ as before, as $v$ is not identically 0 in the anulus $B_{\overline{r}} \backslash B_1$.

\noindent Let $s >1$ and $p \neq n $
$$
\frac{d}{ds}\left( s^{n-1} \abs{v'(s)}^{p-2} v'(s)\right)=0
$$
$$
\abs{v'(s)}^{p-2} v'(s)=\frac{C}{s^{n-1}} 
$$
by imposing the continuity of the derivative for $s=1$, we obtain that $C= -1/n$
\begin{align*}
    v'(s) &= - \frac{s^{-\frac{n-1}{p-1}}}{n^\alpha}, \\
    v(s) &= - \frac{p-1}{n^\alpha  (p-n)} s^{\frac{p-n}{p-1}} + D,
\end{align*}
and by Robin conditions
\begin{gather*}
    \abs{v'(\overline{r})}^{p-2} v'(\overline{r}) + \beta v(\overline{r})^{p-1}=0, \\
    - \frac{\overline{r}^{-n-1}}{n} + \beta \left( - \frac{p-1}{n^\alpha  (p-n)} \overline{r}^{\frac{p-n}{p-1}} + D \right)^{(p-1)}=0, \\
    D= \frac{1}{(n\beta)^\alpha}\overline{r}^{-\frac{n-1}{p-1}}+ \frac{p-1}{n^\alpha  (p-n)} \overline{r}^{\frac{p-n}{p-1}}.
\end{gather*}

\noindent By imposing the continuity of $v$ for $s=1$, we have 
$$
B= \frac{p-1}{n^\alpha p}+ \frac{\overline{r}^{-\frac{n-1}{p-1}}}{(n\beta)^\alpha}+ \frac{p-1}{n^\alpha  (p-n)}\left( \overline{r}^{\frac{p-n}{p-1}} -1\right)
$$
that is to say
\begin{equation*}
    v(s)= \begin{cases}
	u(s) +\frac{1}{(n\beta)^\alpha} \left( \overline{r}^{-\frac{n-1}{p-1}}-1\right)+ \frac{p-1}{n^\alpha  (p-n)}\left( \overline{r}^{\frac{p-n}{p-1}} -1\right) & \text{ if } s<1 \\
    \frac{1}{(n\beta)^\alpha} \overline{r}^{-\frac{n-1}{p-1}}+ \frac{p-1}{n^\alpha  (p-n)} \left( \overline{r}^{\frac{p-n}{p-1}}-s^{\frac{p-n}{p-1}}\right)& \text{ if } 1<s<\overline{r}
	\end{cases}
\end{equation*}
For convenience's sake, we set $\displaystyle{h=\frac{1}{(n\beta)^\alpha} \left( \overline{r}^{-\frac{n-1}{p-1}}-1\right)+ \frac{p-1}{n^\alpha  (p-n)}\left( \overline{r}^{\frac{p-n}{p-1}} -1\right)}$.
	
\noindent So we have
$$
\norma{v}_{L^\infty(\Omega^\sharp) } = \norma{v}_{L^\infty(B_1) } =  \norma{u}_{L^\infty(\Omega) } + h= u(0)+h
$$
	
\noindent By using Taylor expansion of the function $(1+r^n)^{\delta}$ we get
$$
h= \left(- \frac{1}{(n\beta)^\alpha} \frac{n-1}{n(p-1)} + \frac{1}{n^{\alpha +1}}\right)r^n + o(r^n),
$$
so, if we choose $\beta < \left( \frac{n-1}{p-1}\right)^{p-1} $, we get
$$
\norma{v}_{L^\infty(\Omega^{\sharp})}= \norma{u}_{L^\infty (\Omega)} - C r^n + o(r^n) \text{ where } C>0.
$$
\vspace{1 em}

\noindent Next example \ref{esempio2} is a counterexample to the corollary \ref{corollario2} in the case $n > p$.

\begin{esempio} 
\label{esempio2}
Let $\Omega \subseteq \R^n$, $p<n$ be the union of two disjoint balls $B_1$ and $B_r$ with radii 1 and $r$ respectively. We choose $\displaystyle{\beta\leq \left( \frac{n-p}{p(p-1)} \right)^{p-1} }$ and we fix $f=1$ on $B_1$ and $f=0$ on $B_r$. Both $u$ and $v$ can be explicitly computed. We have $\norma{u}_p^p - \norma{v}_p^p = C r^n + o(r^n)$, where $C$ is a positive constant. 

\end{esempio}
\noindent \emph{Proof.}
Let us consider the Taylor expansion of $(1+y)^p$, we get
$$
\norma{v}^p_{L^p(B_1)} = \int_{B_1} (u+h)^p=  \norma{u}^p_{L^p(B_1)} +p\norma{u}^{p-1}_{L^{p-1}(B_1)} h + o(r^n)
$$
Moreover
$$
\norma{v}^p_{L^p(B_{\overline{r}} \backslash B_1)}= \frac{\omega_n }{(n\beta)^{\alpha p}} r^n + o(r^n)
$$
as if $1<s<\overline{r}$
\[
    \frac{1}{(n\beta)^\alpha} \overline{r}^{-\frac{n-1}{p-1}} \leq  v(s) \leq \frac{1}{(n\beta)^\alpha} \overline{r}^{-\frac{n-1}{p-1}} + \frac{p-1}{n^\alpha  (p-n)} \left( \overline{r}^{\frac{p-n}{p-1}}-1\right)
\]
thus
$$
v(s)= \frac{1}{(n\beta)^\alpha} + O(r^n)
$$
and by integration we obtain the value of the norm in $L^p(B_{\overline{r}} \backslash B_1)$.

\noindent So
$$
\norma{v}^p_{L^p(\Omega^{\sharp})} =\norma{v}^p_{L^p(B_1)} + \norma{v}^p_{L^p(B_{\overline{r}} \backslash B_1)} =   \norma{u}^p_{L^p(\Omega)} +p\norma{u}^{p-1}_{L^{p-1}(B_1)} h + \frac{\omega_n }{(n\beta)^{\alpha p}} r^n +o(r^n) 
$$
and recalling that
$$
	h= \left(- \frac{1}{(n\beta)^\alpha} \frac{n-1}{n(p-1)} + \frac{1}{n^{\alpha +1}}\right)r^n + o(r^n)
$$
we get
$$
\norma{v}^p_{L^p(\Omega^{\sharp})} =\norma{u}^p_{L^p(\Omega)} +\left[ p\norma{u}^{p-1}_{L^{p-1}(B_1)} \left(- \frac{1}{(n\beta)^\alpha} \frac{n-1}{n(p-1)} + \frac{1}{n^{\alpha +1}} \right)+ \frac{\omega_n }{(n\beta)^{\alpha p}} \right] r^n +o(r^n).
$$
We have to understand whether 
\begin{equation}
\label{38}
    p\norma{u}^{p-1}_{L^{p-1}(B_1)} \left(- \frac{1}{(n\beta)^\alpha} \frac{n-1}{n(p-1)} + \frac{1}{n^{\alpha +1}} \right)+ \frac{\omega_n }{(n\beta)^{\alpha p}} <0.
\end{equation}

\noindent If we choose $\displaystyle{\beta< \left( \frac{n-1}{p-1} \right)^{p-1} }$ we have  $\displaystyle{- \frac{1}{(n\beta)^\alpha} \frac{n-1}{n(p-1)} + \frac{1}{n^{\alpha +1}}<0}$. In order to have \eqref{38}, we need
 
\begin{align*}
    \norma{u}^{p-1}_{L^{p-1}(B_1)} &> \frac{\omega_n }{(n\beta)^{\alpha p}} \left[ \frac{n(p-1) n^\alpha \beta^\alpha }{ p(n-1)-p\beta^\alpha (p-1)} \right] \\
    \norma{u}^{p-1}_{L^{p-1}(B_1)} &> \frac{\omega_n }{(n\beta)^{\alpha (p-1)}} \left[ \frac{n(p-1) }{ p(n-1)-p\beta^\alpha (p-1)} \right].
\end{align*}

If we show that
\begin{equation}
    \label{39}
    \left[ \frac{n(p-1) }{ p(n-1)-p\beta^\alpha (p-1)} \right] \leq 1
\end{equation}

\noindent then
$$
 u(s) > \frac{1}{(n \beta)^\alpha } \implies \norma{u}^{p-1}_{L^{p-1}(B_1)} > \frac{\omega_n}{(n\beta)^{\alpha (p-1)}}.
$$
We just have to verify \eqref{39} 
\begin{equation*}
    \begin{split}
\left[ \frac{n(p-1) }{ p(n-1)-p\beta^\alpha (p-1)} \right] \leq 1 &\iff n(p-1) \leq p(n-1) - p \beta^\alpha (p-1)\\
&\iff p-n \leq - p(p-1) \beta^\alpha <0 \quad \text{ (if and only if $p<n!$)} \\
&\iff \beta \leq \left( \frac{n-p}{p(p-1)}\right)^{p-1}
    \end{split}
\end{equation*}

\addcontentsline{toc}{chapter}{ Bibliografia}

\renewcommand{\abstractname}{}    % clear the title
	\vspace{0.5cm}
\begin{abstract}

		\noindent 	\textsc{Dipartimento di Matematica e Applicazioni “R. Caccioppoli”, Universita` degli Studi di Napoli “Federico II”, Complesso Universitario Monte S. Angelo, via Cintia - 80126 Napoli, Italy.}
		
		\textsf{e-mail: vincenzo.amato@unina.it}
	
		\textsf{e-mail: albalia.masiello@unina.it}
		
	\vspace{0.5cm}
	\noindent \textsc{Mathematical and Physical Sciences for Advanced Materials and Technologies, Scuola Superiore Meridionale, Largo San Marcellino 10, 80126 Napoli, Italy.}
	
		\textsf{e-mail: andrea.gentile2@unina.it}
\end{abstract}

\end{document}